\documentclass[11pt,leqno]{article}
\usepackage{amsmath,amsthm,amsfonts,amssymb}
\usepackage[colorlinks=true, pdfstartview=FitV, linkcolor=blue, citecolor=red, urlcolor=blue]{hyperref}
\hypersetup{breaklinks=true}
\usepackage{graphicx}
\usepackage{caption,subfig}
\usepackage{color}
\usepackage{tikz}
\usetikzlibrary{calc}
\usepackage{calrsfs}

\usepackage{geometry}
\geometry{hmargin=2cm,vmargin=3cm}

\newcommand{\R}{{\mathbb R}}
\newcommand{\C}{{\mathbb C}}
\newcommand{\U}{{\mathbb U}}
\newcommand{\F}{{\mathbb F}}
\newcommand{\eps}{{\varepsilon}}
\newcommand{\bfS}{{\bf S}}

\newtheorem{theorem}{Theorem}[section]
\newtheorem{proposition}[theorem]{Proposition}

\newtheorem{lem}[theorem]{Lemma}

\numberwithin{equation}{section}

\title{On the Mach stem configuration\\
with shallow angle}

\author{Jean-Fran\c{c}ois {\sc Coulombel}\thanks{Institut de Math\'ematiques de Toulouse - UMR 5219, 
Universit\'e de Toulouse ; CNRS, Universit\'e Paul Sabatier, 118 route de Narbonne, 31062 Toulouse Cedex 9 , France. 
Part of this work was achieved while J.-F. C. was a member of the Laboratoire de Math\'ematiques Jean Leray in Nantes, 
and the author gratefully thanks this institution for the stimulating working conditions it has provided. Research of J.-F. C. 
was supported by ANR project BoND, ANR-13-BS01-0009, and ANR project Nabuco, ANR-17-CE40-0025. Email: 
{\tt jean-francois.coulombel@math.univ-toulouse.fr}} $\,$ \& 
Mark {\sc Williams}\thanks{University of North Carolina, Mathematics Department, CB 3250, Phillips Hall, 
Chapel Hill, NC 27599. USA. Email: {\tt williams@email.unc.edu}.}}

\begin{document}

\maketitle

\begin{abstract}
The aim of this article is to explain why similar \emph{weak stability} criteria appear in both the construction of \emph{steady} Mach 
stem configurations bifurcating from a reference planar shock wave solution to the compressible Euler equations, as studied by Majda 
and Rosales [Stud. Appl. Math. 1984], and in the weakly nonlinear stability analysis of the same planar shock performed by the same 
authors [SIAM J. Appl. Math. 1983], when that shock is viewed as a solution to the \emph{evolutionary} compressible Euler equations. 
By carefully studying the normal mode analysis of planar shocks in the evolutionary case, we show that for a uniquely defined tangential 
velocity with respect to the planar front, the temporal frequency which allows for the amplification of highly oscillating wave packets, 
when reflected on the shock front, vanishes. This specific tangential velocity is found to coincide with the expression given by Majda 
and Rosales [Stud. Appl. Math. 1984] when they determine the steady planar shocks that admit arbitrarily close steady Mach stem 
configurations. The links between the causality conditions for Mach stems and the so-called Lopatinskii determinant for shock waves 
are also clarified.
\end{abstract}

\section{Introduction}
\label{intro}

The stability analysis of shock waves in gas dynamics now has a very long history, dating back to pioneering works for instance by D'yakov 
\cite{dyakov}, Erpenbeck \cite{erpenbeck} and followers, see, e.g., \cite{swanfowles,BE} and references therein. At the linearized level, 
determining stability amounts to finding unstable and/or neutrally stable eigenmodes. The most favorable situation corresponds to nonexistence 
of unstable nor neutrally stable eigenmodes. Following Majda's memoirs \cite{M1,M2}, this regime will be referred to as that of \emph{uniform 
stability}. When unstable eigenmodes (of positive real part) occur, \emph{violent instability} is expected to take place. We shall mainly be 
concerned here with the intermediate situation where unstable eigenmodes do not occur but neutrally stable (that is, purely imaginary) 
eigenmodes do arise. This regime will be referred to as that of \emph{weak stability}. From a more mathematical point of view, the regime 
we shall consider corresponds to the so-called WR (for \emph{Weak Real}) class identified in \cite{BRSZ} but we shall only be concerned 
here with the particular problem of shock waves in gas dynamics.

In the first part \cite{MR1} of a series of papers, Majda and Rosales considered the regime of weak stability for reacting shocks and identified 
some weakly nonlinear waves that exhibited an \emph{amplification} phenomenon. The weakly nonlinear waves considered in \cite{MR1} are 
approximate solutions to the \emph{evolutionary} compressible Euler equations that are small, high frequency perturbations of a planar reference 
shock with \emph{zero} tangential velocity. The analysis in \cite{MR1} has been recently generalized by the authors in \cite{CW}. In the second 
part \cite{MR2} of their series, Majda and Rosales considered the \emph{steady} Euler equations in two space dimensions and proved that 
\emph{in the exact same regime of weak stability}, and for some specific \emph{nonzero} tangential velocity, a step shock could `bifurcate' 
into a family of \emph{steady} Mach stems with shallow angle (see Figures \ref{fig:planarshock} and \ref{fig:machstem} hereafter for an 
illustration), thus providing ``a completely independent confirmation of that theory'' (quote from \cite{MR2}). The links between the two problems 
in \cite{MR1,MR2} and the appearance of the exact same weak stability condition are somehow hidden in lengthy calculations. It is the purpose 
of this article to clearly explain the role of the nonzero tangential velocity in \cite{MR2} and its connection with the so-called Lopatinskii determinant 
that has now been repeatedly computed for decades in the stability analysis of planar shock waves with \emph{zero} tangential velocity. As should 
be clear from our analysis below, the problems studied in \cite{MR1} and \cite{MR2} are not so independent as they might look at first glance.

Our main conclusions can be summarized as follows : the tangential velocity exhibited in \cite{MR2} for the analysis of step shocks bifurcating into 
steady Mach stems with shallow angle coincides with the root to the Lopatinskii determinant encoding the stability of the planar shock with same 
density/entropy/normal velocity but \emph{zero} tangential velocity. This first result is proved in Proposition \ref{prop1} below. Now a crucial property 
of the Euler equations is Galilean invariance which will reflect here into the result of Lemma \ref{determinant-velocity} that connects the Lopatinskii 
determinant associated with two shock waves that share the same density/entropy/normal velocity but do not have the same tangential velocity. 
Combining the results of Proposition \ref{prop1} and Lemma \ref{determinant-velocity}, we find that the tangential velocity exhibited in \cite{MR2} 
is equivalently determined by the requirement that the associated Lopatinskii determinant vanishes at the time frequency \emph{zero}, hence the 
connection with the \emph{steady} Euler equations. We review some of the arguments in \cite{MR2} and explain why the \emph{causality} 
conditions used there as an admissibility criterion for Mach stems turn out to yield the same causality conditions used in \cite{MR1} to discriminate 
between incoming and outgoing wave packets.

The article is organized as follows. In Section \ref{normal}, we briefly review the normal mode analysis which yields uniform/weak stability criteria 
for shock waves in \emph{evolutionary} gas dynamics. In the weak stability regime, we verify that the expression of the tangential velocity given in 
\cite{MR2} coincides with the root to the Lopatinskii determinant. This first observation, based on `brute force computations', is explained with further 
details in Section \ref{sect:MachStem} where we prove that the bifurcation problem for the steady Euler equations considered in \cite{MR2} amounts 
to determining a tangential velocity for which the Lopatinskii determinant vanishes at the time frequency zero. We also give a complete construction 
of the family of Mach stems bifurcating from the reference planar shock, thus completing and clarifying some of the arguments in \cite{MR2}. In 
particular, we make precise the assumptions on the pressure law under which the Mach stem construction can be achieved.

\section{The normal mode analysis of shock waves}
\label{normal}

This Section could deal with any space dimension $d \ge 2$, but since Section \ref{sect:MachStem} will deal specifically with 
two-dimensional flows, we restrict from now on to $d=2$ in order to keep the same notation throughout the whole article. We 
follow the presentation in \cite{mplohr} and consider a compressible inviscid fluid endowed with a \emph{complete} equation of 
state $e=e(\tau,s)$. Here $\tau$ denotes the specific volume of the fluid, $s$ denotes the specific entropy and $e$ denotes the 
specific internal energy. The pressure $p$ and temperature $T$ are defined by the fundamental law of thermodynamics
\begin{equation*}
{\rm d}e \, = \, -p \, {\rm d}\tau +T \, {\rm d}s \, .
\end{equation*}
The (evolutionary) compressible Euler equations in two space dimensions are written in the compact form
\begin{equation}
\label{euler}
\partial_t f_0(U) +\partial_{x_1} f_1(U) +\partial_{x_2} f_2(U) \, = \, 0 \, ,
\end{equation}
where $U=(\tau,{\bf u},s)$ is the four component vector\footnote{Vectors are written either as rows or columns when no confusion 
is possible.} of unknowns and ${\bf u}=(u,v) \in \R^2$ is the fluid velocity. The fluxes $f_\alpha$, $\alpha=0,1,2$, in \eqref{euler} are 
given by
\begin{align}
&f_0(U) \, := \, \begin{bmatrix}
\rho \\
\rho \, u \\
\rho \, v \\
\dfrac{1}{2} \, \rho \, |{\bf u}|^2 +\rho \, e \end{bmatrix} \, , \, 
&f_1(U) \, := \, \begin{bmatrix}
\rho \, u \\
\rho \, u^2 +p \\
\rho \, u \, v \\
\left( \dfrac{1}{2} \, \rho \, |{\bf u}|^2 +\rho \, e +p \right) u \end{bmatrix} \, ,\label{deff} \\
& &f_2(U) \, := \, \begin{bmatrix}
\rho \, v \\
\rho \, u \, v \\
\rho \, v^2 +p \\
\left( \dfrac{1}{2} \, \rho \, |{\bf u}|^2 +\rho \, e +p \right) v \end{bmatrix} \, ,\notag
\end{align}
where $\rho :=1/\tau$ denotes the density. Our assumptions on the equation of state are (part of) the classical Bethe-Weyl 
inequalities (we refer again to \cite{mplohr}):
\begin{equation*}
p>0 \, ,\quad \quad T>0 \, ,\quad \quad \dfrac{\partial^2 e}{\partial \tau^2} >0 \, ,\quad \quad 
\dfrac{\partial^2 e}{\partial s \, \partial \tau} <0 \, ,\quad \quad \dfrac{\partial^3 e}{\partial^3 \tau} <0 \, .
\end{equation*}
We then define the sound speed $c$ and the so-called Gr\"uneisen coefficient $\Gamma$ by setting
\begin{equation*}
c^2 \, := \, \tau^2 \, \dfrac{\partial^2 e}{\partial \tau^2} \, = \, -\tau^2 \, \dfrac{\partial p}{\partial \tau} \, ,\quad \quad 
\Gamma \, := \, -\dfrac{\tau}{T} \, \dfrac{\partial^2 e}{\partial s \, \partial \tau} \, = \, \dfrac{\tau}{T} \, \dfrac{\partial p}{\partial s} \, ,
\end{equation*}
both being positive quantities.

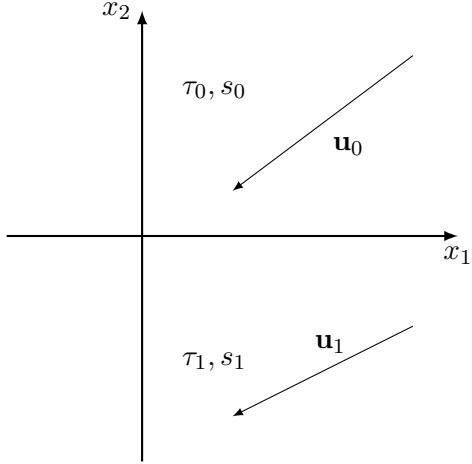
\begin{figure}[h!]
\begin{center}
\begin{tikzpicture}[scale=1.2,>=latex]
\draw[thick,->] (-1.5,0) -- (3.5,0) node[below] {$x_1$};
\draw[thick,->] (0,-2.5)--(0,2.5) node[left] {$x_2$};
\draw[->] (3,2) -- (1,0.5);
\node[right] at (2,1) {${\bf u}_0$};
\node[above] at (0.8,1.4) {$\tau_0,s_0$};
\draw[->] (3,-1) -- (1,-2);
\node[right] at (1.8,-1.2) {${\bf u}_1$};
\node[above] at (0.8,-1.6) {$\tau_1,s_1$};
\end{tikzpicture}
\caption{The steady planar shock.}
\label{fig:planarshock}
\end{center}
\end{figure}

A shock wave\footnote{The problem studied in \cite{MR1} deals with reacting flows, for which the pressure laws ahead 
and behind the shock do not necessarily coincide. The normal mode analysis in \cite{MR1} though is independent of the 
presence of a chemical reaction as long as Lax shock inequalities \eqref{ineglax} below are satisfied. We thus restrict 
to the more standard framework of the compressible Euler equations without reaction and with same pressure law on either 
side of the shock for simplicity.} is a piecewise constant solution to \eqref{euler} satisfying Lax shock inequalities \cite{Lax}. 
In the context of the Euler equations \eqref{euler}, one can always perform a Galilean change of frame and rotate the coordinate 
axes so that the shock wave is steady and reads
\begin{equation}
\label{shock}
\underline{U} \, = \, \begin{cases}
\underline{U}_0 \, := \, (\tau_0,{\bf u}_0,s_0) & \text{\rm if } x_2>0 \, ,\\
\underline{U}_1 \, := \, (\tau_1,{\bf u}_1,s_1) & \text{\rm if } x_2<0 \, ,
\end{cases}
\end{equation}
as depicted in Figure \ref{fig:planarshock}. The Rankine-Hugoniot conditions, which ensure that \eqref{shock} is a weak solution 
to \eqref{euler}, and Lax shock inequalities then read
\begin{subequations}
\label{RH}
\begin{align}
& {\bf j} \, := \, -\rho_0 \, v_0 \, = \, -\rho_1 \, v_1 >0 \, , \quad & & u_0 \, = \, u_1 \, =: \, \overline{u} \, , \, \label{RH1} \\
& {\bf j}^2 \, (\tau_1 -\tau_0) \, = \, p_0 -p_1 \, , \quad & & e_1-e_0 +\dfrac{p_1+p_0}{2} \, (\tau_1-\tau_0) \, = \, 0 \, ,\label{RH2} \\
& 0 < \dfrac{-v_1}{c_1} < 1 < \dfrac{-v_0}{c_0} \, . & & \label{ineglax}
\end{align}
\end{subequations}
The tangential velocity $\overline{u}$ could also be set to zero by a Galilean change of frame, but \emph{we do not do so} here 
in order to highlight the links with the analysis of Section \ref{sect:MachStem}. Observe that up to changing $x_1$ into $-x_1$ 
and the tangential velocity accordingly, we can always assume $\overline{u} \le 0$ without loss of generality. This is the convention 
in \cite{MR2} and we follow it here.

The (linear) stability properties of the particular solution \eqref{shock} have been made precise after a long series of contributions 
which we have partly recalled in the introduction. The analysis is based on a normal mode decomposition that is briefly summarized 
below. We refer to the appendix of \cite{Zumbrun-JL} and to \cite[chapter 15]{BS} for a detailed and complete analysis of this stability 
problem, and just introduce the notation that will be useful later on for our purpose.

We introduce the Fourier variable $\eta \in \R$ dual to $x_1$, and the Laplace variable $z=\delta -i\, \gamma$, $\gamma \ge 0$, 
dual to $t$. We focus on the state `$1$' behind the shock since there are no \emph{stable} modes ahead of the shock. The 
eigenmodes $\exp (i\, \omega \, x_2)$ under consideration correspond to complex numbers $\omega$ of nonpositive imaginary 
part\footnote{Here we have extracted the $i$ factor from all frequency parameters, and consider the half space $x_2 \le 0$, which 
is the reason why we consider Im $z \le 0$ and Im $\omega \le 0$. The conventions in \cite{Zumbrun-JL,BS} are different but passing 
from one to the other is harmless.} such that there exists a nontrivial solution $(\dot{\tau},\dot{u},\dot{v},\dot{s}) \in \C^4$ to the linear 
system
\begin{equation*}
\begin{cases}
(z+\overline{u} \, \eta +v_1 \, \omega) \, \dot{\tau} -\tau_1 \, \eta \, \dot{u} -\tau_1 \, \omega \, \dot{v} \, = \, 0 \, ,& \\[1ex]
(z+\overline{u} \, \eta +v_1 \, \omega) \, \dot{u} -\dfrac{c_1^2}{\tau_1} \, \eta \, \dot{\tau} 
+\Gamma_1 \, T_1 \, \eta \, \dot{s} \, = \, 0 \, ,& \\[1.5ex]
(z+\overline{u} \, \eta +v_1 \, \omega) \, \dot{v} -\dfrac{c_1^2}{\tau_1} \, \omega \, \dot{\tau} 
+\Gamma_1 \, T_1 \, \omega \, \dot{s} \, = \, 0 \, ,& \\[1ex]
(z+\overline{u} \, \eta +v_1 \, \omega) \, \dot{s} \, = \, 0 \, . &
\end{cases}
\end{equation*}
All quantities with a `$1$' index are evaluated behind the shock, that is, in the region $\{ x_2<0 \}$ for \eqref{shock}. The analysis of 
the latter linear system gives rise to two eigenmodes $\omega_0$ and $\omega_-$, which are defined by
\begin{equation}
\label{eigenmodes}
z+\overline{u} \, \eta +v_1 \, \omega_0 \, = \, 0 \, ,\quad \quad 
(z+\overline{u} \, \eta +v_1 \, \omega_-)^2 \, = \, c_1^2 \, (\eta^2 +\omega_-^2) \, ,
\end{equation}
where the choice of $\omega_-$ in \eqref{eigenmodes} is such that $\omega_-$ has negative imaginary part when $z$ also 
has negative imaginary part, and $\omega_-$ is extended continuously up to real values of $z$, see \cite[chapters 14 \& 15]{BS}. 
The corresponding eigenspaces associated with $\omega_0$ and $\omega_-$ are
\begin{equation}
\label{eigenspaces}
E_0(z,\eta) \, := \, \text{\rm Span } \left\{ \, 
\begin{bmatrix}
0 \\
\, \omega_0 \, \\
\, -\eta \, \\
0 \end{bmatrix} \, , \, \begin{bmatrix}
\, \Gamma_1 \, T_1 \, \tau_1 \, \\
0 \\
0 \\
c_1^2 \end{bmatrix} \, \right\} \, ,\quad \quad 
E_-(z,\eta) \, := \, \text{\rm Span} \begin{bmatrix}
\, \tau_1 \, (z+\overline{u} \, \eta +v_1 \, \omega_-) \, \\
c_1^2 \, \eta \\
c_1^2 \, \omega_- \\
0 \end{bmatrix} \, .
\end{equation}
Specifying to the case $z=0$, $\eta=1$, we obtain
\begin{equation}
\label{eigenspaces01}
E_0(0,1) \, = \, \text{\rm Span } \left\{ 
\begin{bmatrix}
0 \\
\overline{u} \\
\, v_1 \, \\
0 \end{bmatrix} \, , \, \begin{bmatrix}
\, \Gamma_1 \, T_1 \, \tau_1 \, \\
0 \\
0 \\
c_1^2 \end{bmatrix} \right\} \, ,\quad 
E_-(0,1) \, = \, \text{\rm Span} \begin{bmatrix}
\, \tau_1 \, (\overline{u}+v_1 \, \omega_-(0,1)) \, \\
c_1^2 \\
c_1^2 \, \omega_-(0,1) \\
0 \end{bmatrix} \, ,
\end{equation}
with
\begin{equation}
\label{eigenmode01}
\omega_-(0,1) \, = \, \dfrac{1}{c_1^2-v_1^2} \, \left( 
v_1 \, \overline{u} +c_1 \, \text{\rm sgn} \, (\overline{u}) \, \sqrt{\overline{u}^2+v_1^2-c_1^2} \right) \, .
\end{equation}
The expression \eqref{eigenmode01} is valid as long as the tangential velocity $\overline{u}$ satisfies $\overline{u}^2+v_1^2>c_1^2$ 
(here sgn denotes the sign function). In that case, $(0,1)$ is a \emph{hyperbolic} frequency for the linearized Euler equations in 
$\{ x_2<0 \}$ because all the roots to \eqref{eigenmodes} are real and they depend smoothly on $(z,\eta)$ near $(0,1)$. The reason 
for choosing $\text{\rm sgn} \, (\overline{u})$ in \eqref{eigenmode01} rather than the opposite sign is forced by the fact that $\omega_-$ 
should be continuous with respect to $(z,\eta)$ on the set $\{ \text{\rm Im} \, z \le 0 \, , \, \eta \in \R \}$. The determination of the appropriate 
sign in \eqref{eigenmode01} can also be interpreted as a \emph{causality} condition, meaning that oscillating wave packets associated with 
the phase
$$
0 \cdot t +1 \cdot x_1 +\omega_-(0,1) \, x_2
$$
should have a group velocity that points inside the half space $\{ x_2<0 \}$ (in other words, the second component of the group velocity 
should be negative). This selection criterion is used when computing the weakly nonlinear expansions in \cite{MR1}.

The subspace $E^s(z,\eta)$ of values $(\dot{\tau},\dot{\bf u},\dot{s})|_{x_2=0^-}$ under consideration is then the direct sum of $E_0(z,\eta)$ 
and $E_-(z,\eta)$, except when the eigenmodes $\omega_0$ and $\omega_-$ coincide. We shall mainly be concerned here with the frequency 
$(z,\eta)=(0,1)$ for which $\omega_0$ and $\omega_-$ do not coincide, and therefore refer to the above mentioned works for the precise 
decomposition of $E^s(z,\eta)$ whenever $\omega_0=\omega_-$. We thus consider
$$
E^s(z,\eta) \, := \, E_0(z,\eta) \oplus E_-(z,\eta) \, ,
$$
and refer from now on to $E^s(z,\eta)$ as the \emph{stable subspace}. (It is a three dimensional subspace of $\C^4$.)

The stability analysis of the step shock \eqref{shock} amounts to determining whether there exist some frequencies $(z,\eta)$ for 
which one can find a \emph{nonzero} pair $(\chi,\dot{U}) \in \C \times E^s(z,\eta)$ that satisfies the linearized Rankine-Hugoniot 
conditions which, following \cite[pages 426-427]{BS} with our notation, read\footnote{Here we have kept track of the possibly 
nonzero tangential velocity $\overline{u}$ while linearizing the Rankine-Hugoniot conditions and made some elementary linear 
combinations between several equations to get \eqref{linearRH}.}
\begin{equation}
\label{linearRH}
\begin{cases}
-\dfrac{v_1}{\tau_1^2} \, \dot{\tau} +\dfrac{1}{\tau_1} \, \dot{v} \, = \, -(\rho_1 -\rho_0) \, (z+\overline{u} \, \eta) \, \chi \, ,& \\[1.5ex]
\dfrac{v_1}{\tau_1} \, \dot{u} \, = \, -(p_1-p_0) \, \eta \, \chi \, ,& \\[1.5ex]
-\dfrac{v_1^2+c_1^2}{\tau_1^2} \, \dot{\tau} +2 \, \dfrac{v_1}{\tau_1} \, \dot{v} +\dfrac{\Gamma_1 \, T_1}{\tau_1} 
\, \dot{s} \, = \, 0 \, ,& \\[1.5ex]
-\dfrac{1}{2} \, \Big( \dfrac{c_1^2}{\tau_1^2} \, (\tau_1-\tau_0) +p_1-p_0 \Big) \, \dot{\tau} 
+T_1 \, \Big( 1+\dfrac{\Gamma_1 \, (\tau_1-\tau_0)}{2\, \tau_1} \Big) \, \dot{s} \, = \, 0 \, . &
\end{cases}
\end{equation}
For future use (we refer once again to \cite[chapter 15]{BS}), it should be kept in mind that \eqref{linearRH} is an equivalent formulation 
of the relation
\begin{equation}
\label{RHequiv}
{\rm d}f_2(\underline{U}_1) \, \dot {U} \, = \, -\chi \, \Big( z \, \big( f_0(\underline{U}_1) -f_0(\underline{U}_0) \big) 
+\eta \, \big( f_1(\underline{U}_1) -f_1(\underline{U}_0) \big) \Big) \, ,
\end{equation}
after some elementary manipulations on the rows of \eqref{RHequiv}. The Jacobian matrix  ${\rm d}f_2 (\underline{U}_1)$ is invertible 
because of Lax shock inequalities \eqref{ineglax} (its eigenvalues are $v_1-c_1$, $v_1$ and $v_1+c_1$), so any nonzero solution 
$(\chi,\dot{U}) \in \C \times E^s(z,\eta)$ to \eqref{linearRH} must satisfy $\chi \neq 0$. In other words, either \eqref{linearRH} has 
no nonzero solution in $\C \times E^s(z,\eta)$, or the set of solutions is a one-dimensional subspace of $\C \times E^s(z,\eta)$. 
Equivalently, the stability analysis of the step shock \eqref{shock} amounts to determining whether there holds
\begin{equation}
\label{RHequiv'}
{\rm d}f_2(\underline{U}_1)^{-1} \, \Big( z \, \big( f_0(\underline{U}_1) -f_0(\underline{U}_0) \big) 
+\eta \, \big( f_1(\underline{U}_1) -f_1(\underline{U}_0) \big) \Big) \in E^s(z,\eta) \, .
\end{equation}

The shock wave linear stability problem may be encoded as the determination of the roots of an appropriate determinant, which is 
usually referred to as the Lopatinskii determinant. More precisely, one can define a complex number $\Delta (\overline{u},z,\eta)$ 
such that $\Delta (\overline{u},z,\eta) =0$ if and only if there exists a nonzero $(\chi,\dot{U}) \in \C \times E^s(z,\eta)$ solution to 
\eqref{linearRH}. We have highlighted here the dependence of the determinant $\Delta$ on the tangential velocity $\overline{u}$, 
in order to state the following result, which is very simple though fundamental if one wants to understand the links between the 
calculations performed in \cite{MR1} and \cite{MR2}.

\begin{lem}
\label{determinant-velocity}
Let the Lopatinskii determinant $\Delta (\overline{u},z,\eta)$ be defined\footnote{As explained in \cite{BS}, there are many possible ways 
to define $\Delta$ but all possible definitions give rise to the same zeroes, if there are any.} such that $\Delta (\overline{u},z,\eta)=0$ if and 
only if there exists a nonzero $(\chi,\dot{U}) \in \C \times E^s(z,\eta)$ solution to \eqref{linearRH}. Then there holds
\begin{equation*}
\Delta (\overline{u},z,\eta) \, = \, \Delta (0,z+\overline{u} \, \eta,\eta) \, ,
\end{equation*}
for all complex number $z$ of nonnegative imaginary part and all real number $\eta$.
\end{lem}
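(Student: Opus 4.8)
The plan is to fix a concrete representative of $\Delta$ for which the asserted equality holds exactly, and then simply trace the dependence on $\overline{u}$. The starting point is the remark, already implicit in \eqref{RH}, that the tangential velocity decouples from all the remaining jump conditions: the relations \eqref{RH1}--\eqref{RH2} determine $\tau_j, v_j, s_j$ (hence $\rho_j, p_j, c_j, \mathbf{j}, \Gamma_1, T_1$) independently of $\overline{u}$, which merely satisfies $u_0 = u_1 = \overline{u}$. Consequently $\Delta(0,\cdot,\cdot)$ refers to a genuine admissible shock sharing the same density, entropy and normal velocity but with vanishing tangential velocity, and every thermodynamic and normal coefficient entering the analysis is literally unchanged when $\overline{u}$ is varied. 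This is the Galilean invariance invoked in the introduction.

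First I would locate every occurrence of $\overline{u}$ in the data defining $\Delta$. In the eigenmode relations \eqref{eigenmodes}, the triple $(\overline{u},z,\eta)$ enters only through $(z+\overline{u}\,\eta,\eta)$, so $\omega_0$ and $\omega_-$ are functions of $(z+\overline{u}\,\eta,\eta)$ alone; since $\text{Im}\,(z+\overline{u}\,\eta)=\text{Im}\,z$ for real $\overline{u},\eta$, the branch selection fixing $\omega_-$ is unaffected by the substitution $z\mapsto z+\overline{u}\,\eta$, so this identification holds including the sign convention. It follows that the spanning vectors of $E_0$ and $E_-$ in \eqref{eigenspaces}, and hence the stable subspace $E^s$, depend on $(\overline{u},z,\eta)$ only through $(z+\overline{u}\,\eta,\eta)$. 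The same is true of \eqref{linearRH}: inspecting the four rows, $\overline{u}$ survives only in the right-hand side of the first one, again through the factor $z+\overline{u}\,\eta$, every other coefficient being built from the $\overline{u}$-independent data above.

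I would then pin down $\Delta$ by writing a generic $\dot U \in E^s(z,\eta)$ as $\alpha_1 e_1 + \alpha_2 e_2 + \alpha_3 e_3$, with $e_1,e_2,e_3$ the three explicit spanning vectors of \eqref{eigenspaces}, and inserting this into \eqref{linearRH}. This recasts the existence of a nonzero $(\chi,\dot U)\in\C\times E^s(z,\eta)$ solving \eqref{linearRH} as a homogeneous $4\times 4$ system in $(\alpha_1,\alpha_2,\alpha_3,\chi)$; I define $\Delta(\overline{u},z,\eta)$ to be its determinant, which vanishes precisely when such a solution exists and is therefore admissible in the sense of the statement. By the previous paragraph every entry of this $4\times 4$ matrix is a function of $(z+\overline{u}\,\eta,\eta)$ only, so the two matrices attached to $(\overline{u},z,\eta)$ and to $(0,z+\overline{u}\,\eta,\eta)$ coincide entrywise, and the identity $\Delta(\overline{u},z,\eta)=\Delta(0,z+\overline{u}\,\eta,\eta)$ follows at once.

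The only real point to watch is the one flagged in the footnote to the statement: $\Delta$ is defined only up to a nonvanishing factor, so the claimed equality concerns the specific representative just built rather than an intrinsic quantity. For this reason I would work throughout with \eqref{linearRH} rather than with the equivalent formulation \eqref{RHequiv}, whose inhomogeneous term $z\,(f_0(\underline{U}_1)-f_0(\underline{U}_0))+\eta\,(f_1(\underline{U}_1)-f_1(\underline{U}_0))$ still carries $\overline{u}$ through the fluxes themselves; it is exactly the row manipulations producing \eqref{linearRH} that expose the clean $z+\overline{u}\,\eta$ dependence. Once one commits to the fixed basis $e_1,e_2,e_3$ and to \eqref{linearRH}, there is no residual computation: the lemma is a pure invariance statement, and no further study of $\omega_0,\omega_-$ is required.
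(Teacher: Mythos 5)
Your proof is correct and is essentially the paper's own argument: the paper likewise observes that $\overline{u}$ enters \eqref{eigenmodes}, \eqref{eigenspaces} and \eqref{linearRH} only through the combination $z+\overline{u}\,\eta$, and that this shift leaves the imaginary part of $z$ (hence the branch selection for $\omega_-$) unchanged, reducing everything to the case $\overline{u}=0$. Your write-up simply fills in the details the paper leaves to the reader, in particular by fixing a concrete $4\times 4$ determinant representative of $\Delta$, which is a legitimate way to make the ``up to a nonvanishing factor'' caveat precise.
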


\noindent The proof of Lemma \ref{determinant-velocity} is elementary and is based on inspection of \eqref{eigenmodes}, \eqref{eigenspaces} 
and \eqref{linearRH}. Namely, it follows from the above definitions that the change of parameters $\tilde{z} := z+\overline{u} \, \eta$ in 
\eqref{eigenmodes}, \eqref{eigenspaces} and \eqref{linearRH} does not affect the sign of the imaginary part of $z$ and it reduces the 
above analysis to the case $\overline{u}=0$. We leave this elementary verification to the reader.

Unsurprisingly, the existence/nonexistence of a zero to $\Delta$, and accordingly the sign of the imaginary part of $z$ to discriminate 
between violent instability and neutral stability, is independent of $\overline{u}$ (a consequence of Galilean invariance). However, the 
precise location of the zeroes to $\Delta$, provided that they exist, \emph{does depend} on $\overline{u}$.

The stability analysis of step shocks with \emph{zero} tangential velocity can be summarized as follows (see \cite{M1} and 
\cite[chapter 15]{BS}), where from now on $M_1 \in (0,1)$ denotes the Mach number $-v_1/c_1$ behind the shock:
\begin{itemize}
 \item If $M_1^2 \, (\tau_0/\tau_1-1)<1/(1+\Gamma_1)$, then $\Delta (0,z,\eta) \neq 0$ for any nonzero pair $(z,\eta) \in \C \times \R$ 
 with Im $z \le 0$. This regime corresponds to \emph{uniform stability}. In particular, the function $\Delta (0,\cdot,1)$ does not vanish 
 on the real line.
 
 \item If $M_1^2 \, (\tau_0/\tau_1-1)>(1+M_1)/\Gamma_1$, then $\Delta (0,z,\eta)=0$ for some pair $(z,\eta)$ with Im $z<0$ and 
 $\eta \neq 0$. This regime corresponds to \emph{violent instability}. Moreover, the function $\Delta (0,\cdot,1)$ does not vanish 
 on the real line.
 
 \item If
\begin{equation}
\label{weakstab}
\dfrac{1}{1+\Gamma_1} \, < \, M_1^2 \, \left( \dfrac{\tau_0}{\tau_1}-1 \right) \, < \, \dfrac{1+M_1}{\Gamma_1} \, ,
\end{equation}
 then there exists a uniquely determined velocity\footnote{We refer to $V$ as a velocity since it has the physical homogeneity of a velocity, 
 see equation \eqref{caracV}-{\rm (iii)}.} $V>0$ such that $\Delta (0,z,\eta)=0$ if and only if\footnote{The final conclusion of the shock 
 wave stability analysis is written here in two space dimensions. In higher space dimensions, one should rather write $z= \pm V \, |\eta|$ 
 instead of $z= \pm V \, \eta$ ($\eta$ becomes a real vector in dimension $d \ge 3$.)} $\eta \neq 0$ and $z= \pm V \, \eta$. This regime 
 corresponds to \emph{neutral stability} and falls into the so-called WR class of \cite{BRSZ}.
\end{itemize}

\noindent In the latter weakly stable case, the velocity $V>0$ can be characterized as follows, see \cite[Theorem 15.1]{BS}:
\begin{align}
{\rm (i)} & \quad V^2 \, > \, c_1^2 -v_1^2 \, ,\notag \\
{\rm (ii)} & \quad \left( 1+M_1^2 -M_1^2 \, \Gamma_1 \, \left( \dfrac{\tau_0}{\tau_1}-1 \right) \right) \, V^2 
\, < \, v_1^2 \, \big( 1-M_1^2 \big) \, \dfrac{\tau_0}{\tau_1} \, ,\label{caracV} \\
{\rm (iii)} & \quad \left( \big( k-1+M_1^2 \big) \, V^2 -v_1^2 \, \big( 1-M_1^2 \big) \, \dfrac{\tau_0}{\tau_1} \right)^2 
\, = \, k^2 \, V^2 \, \Big( M_1^2 \, V^2 -v_1^2 \, \big( 1-M_1^2 \big) \Big) \, , \notag
\end{align}
where the parameter $k$ in \eqref{caracV}-{\rm (iii)} is defined by:
$$
k \, := \, 2-M_1^2 \, \Gamma_1 \, \left( \dfrac{\tau_0}{\tau_1}-1 \right) \, .
$$
In the limit case
\begin{equation}
\label{weakstablim}
\dfrac{1}{1+\Gamma_1} \, = \, M_1^2 \, \left( \dfrac{\tau_0}{\tau_1}-1 \right) \, ,
\end{equation}
$V$ tends to the `glancing' value $\sqrt{c_1^2 -v_1^2}$. The other limit case
\begin{equation*}
M_1^2 \, \left( \dfrac{\tau_0}{\tau_1}-1 \right) \, = \, \dfrac{1+M_1}{\Gamma_1} \, ,
\end{equation*}
corresponds to a transition from weak stability to violent instability for which the Lopatinskii determinant vanishes at the frequency 
$(z,\eta)=(1,0)$. This case is studied in \cite{serreTAMS}.

For what follows, the key point to keep in mind is that if the function $\Delta (0,\cdot,1)$ vanishes on the real line, then either the shock 
wave \eqref{shock} satisfies \eqref{weakstab} or it satisfies one of two the limit cases of \eqref{weakstab}, namely \eqref{weakstablim}. 
In the other limit case of \eqref{weakstab}, the Lopatinskii determinant vanishes for some real number $z$ but with a zero tangential 
frequency $\eta$.

Our main result in this Section asserts that the tangential velocity exhibited in \cite{MR2} for giving rise to a bifurcation from a rectilinear 
shock to a family of steady Mach stems coincides (up to the sign convention) with the root $V$ to the Lopatinskii determinant in the 
regime \eqref{weakstab}. This is a preliminary connection between the shock wave stability analysis, that deals with an \emph{evolutionary} 
problem, and the \emph{steady} Mach stem bifurcation problem explored in \cite{MR2}.

\begin{proposition}
\label{prop1}
Assume that the inequalities \eqref{weakstab} are satisfied. Then the tangential velocity $c_\star$ defined in \cite[page 124]{MR2} 
coincides with the velocity $V$ for which the Lopatinskii determinant $\Delta (0,\cdot,\cdot)$ vanishes at $(\pm V,1)$. Thanks to 
Lemma \ref{determinant-velocity}, the tangential velocity $\overline{u} :=-c_\star$ in \cite{MR2} thus satisfies $\Delta (\overline{u},0,1)=0$.
\end{proposition}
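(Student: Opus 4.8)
The plan is to show that $c_\star$ satisfies exactly the three characterizing conditions \eqref{caracV} of the velocity $V$, and then to deduce the final assertion from Lemma \ref{determinant-velocity}. Since the velocity $V>0$ solving \eqref{caracV} is \emph{unique} in the weak stability regime \eqref{weakstab} (as recalled just before the statement), matching $c_\star$ with $V$ reduces to verifying that $c_\star$ is a root of the polynomial \eqref{caracV}-{\rm (iii)} lying in the range prescribed by \eqref{caracV}-{\rm (i)} and \eqref{caracV}-{\rm (ii)}.

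First I would recall the explicit formula for $c_\star$ from \cite[page 124]{MR2} and rewrite it in the notation of the present paper, expressing it solely in terms of $M_1=-v_1/c_1$, $c_1$, $v_1$, $\Gamma_1$ and the density ratio $\tau_0/\tau_1$. The Majda--Rosales definition is phrased in their own variables for the \emph{steady} bifurcation problem, so the first task is a careful dictionary between the two sets of notation, using the Rankine--Hugoniot relations \eqref{RH1}--\eqref{RH2} together with the definitions of $c_1$ and $\Gamma_1$ to eliminate redundant quantities and bring $c_\star$ to a closed form amenable to comparison.

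Second, I would substitute this expression into the polynomial identity \eqref{caracV}-{\rm (iii)}, with the parameter $k=2-M_1^2\,\Gamma_1\,(\tau_0/\tau_1-1)$ organizing the algebra, and verify that it holds identically. This is the ``brute force'' step announced in the introduction: it is an algebraic identity between two expressions built from the same thermodynamic data, and is not transparent a priori. I would then check the two range conditions, namely $c_\star^2>c_1^2-v_1^2$ and the upper bound \eqref{caracV}-{\rm (ii)}; under the hypothesis \eqref{weakstab} these select the correct root of the quadratic-in-$V^2$ equation \eqref{caracV}-{\rm (iii)}, so that the uniqueness of $V$ forces $c_\star=V$.

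Finally, the concluding assertion is immediate from Lemma \ref{determinant-velocity}: taking $\overline{u}=-c_\star=-V$, $z=0$ and $\eta=1$ yields $\Delta(-V,0,1)=\Delta(0,-V,1)=0$, since $\Delta(0,\cdot,1)$ vanishes at both $\pm V$ by the very definition of $V$. The main obstacle is the algebraic verification in the second step: the chief difficulty is not conceptual but bookkeeping, translating the Majda--Rosales formula faithfully and then confirming that the unobvious polynomial relation \eqref{caracV}-{\rm (iii)} is satisfied, a computation best streamlined by first using the Rankine--Hugoniot relations to reduce the number of independent parameters.
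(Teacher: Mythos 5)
Your proposal is correct and takes essentially the same route as the paper: one verifies that $c_\star$, as defined through \eqref{eqPhi} and \eqref{defbetac}, satisfies all three conditions of the characterization \eqref{caracV}, concludes $c_\star = V$ by uniqueness of the root in the regime \eqref{weakstab}, and obtains $\Delta(\overline{u},0,1)=0$ from Lemma \ref{determinant-velocity} exactly as you indicate. The one practical remark is that your plan leaves the verification of \eqref{caracV}-(iii) as raw substitution of a closed-form $c_\star$; the paper streamlines this by never solving the quadratic \eqref{eqPhi} explicitly, instead working with $y:=M_1\,\Phi$ as an implicit root of the polynomial $Q$ in \eqref{defq1} (via the relation \eqref{relation1}) and exhibiting the left-hand side of the resulting polynomial identity as the product $\nu \, Q(y)\,(a_2 y^2+a_1 y+a_0)$, which vanishes because $Q(y)=0$ and thereby avoids manipulating nested radicals.
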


\begin{proof}[Proof of Proposition \ref{prop1}]
We check the items {\rm (i)}, {\rm (ii)} and {\rm (iii)} in \eqref{caracV} one by one. For the sake of completeness, let us first recall how 
the tangential velocity $c_\star$ is defined in \cite[page 124]{MR2}. Assuming that the inequalities \eqref{weakstab} are satisfied, one 
first computes the (unique real) root $\Phi \in (M_1,1)$ to the second order polynomial equation
\begin{equation}
\label{eqPhi}
\Phi^2 +M_1 \, \Gamma_1 \, \Phi -1-\Gamma_1 +\dfrac{1-M_1^2}{(\tau_0/\tau_1-1) \, M_1^2} \, = \, 0 \, .
\end{equation}
Then one defines $\beta \in (-1,M_1)$ and $c_\star>0$ by the relations:
\begin{equation}
\label{defbetac}
\beta \, := \, \dfrac{2\, M_1 -(1+M_1^2) \, \Phi}{1+M_1^2 -2\, M_1 \, \Phi} \, ,\quad 
c_\star \, := \, c_1 \, \dfrac{1-M_1 \, \beta}{\sqrt{1-\beta^2}} \, .
\end{equation}
We are therefore going to show that the velocity $c_\star$ given in \eqref{defbetac} satisfies the characterization in \eqref{caracV}. 
For the sake of simplicity, we introduce the notation
\begin{equation}
\label{defnu}
\nu \, := \, \dfrac{\tau_0}{\tau_1} -1 \, > \, 0 \, ,
\end{equation}
which measures the compression ratio of the shock, the inequality $\nu>0$ following here from \eqref{weakstab} (anyway under rather 
mild additional assumptions on the pressure law, shock waves are always compressive discontinuities, see \cite{mplohr}).
\bigskip

$\bullet$ Verifying \eqref{caracV}-(i): one uses the definition \eqref{defbetac} of $\beta$ and computes
\begin{equation*}
c_\star^2 \, = \, c_1^2 \, \dfrac{(1-M_1 \, \beta)^2}{1-\beta^2} \, = \, c_1^2 \, \dfrac{(1-M_1 \, \Phi)^2}{1-\Phi^2} 
> c_1^2 \, \big( 1-M_1^2 \big) \, = \, c_1^2 -v_1^2 \, ,
\end{equation*}
where the final inequality follows from $\Phi \in (M_1,1)$.
\bigskip

$\bullet$ Verifying \eqref{caracV}-(ii): for future use, we introduce the rescaled parameter:
\begin{equation}
\label{defy}
y \, := \, M_1 \, \Phi \in (M_1^2,M_1) \, .
\end{equation}
Since $\Phi$ satisfies \eqref{eqPhi}, $y$ in \eqref{defy} is a root (and it is even the largest one) to the polynomial $Q$ that is 
defined by:
\begin{equation}
\label{defq1}
Q(Y) \, := \, Y^2 +M_1^2 \, \Gamma_1 \, Y -M_1^2 \, (1+\Gamma_1) +\dfrac{1-M_1^2}{\nu} \, .
\end{equation}

Using the relation
\begin{equation}
\label{relation1}
c_\star^2 \, = \, c_1^2 \, \dfrac{(1-M_1 \, \Phi)^2}{1-\Phi^2} \, = \, v_1^2 \, \dfrac{(1-y)^2}{M_1^2-y^2} \, ,
\end{equation}
which we have found at the previous step, and using also the above definition \eqref{defnu} of the parameter $\nu$, verifying 
\eqref{caracV}-(ii) amounts to proving the inequality
\begin{equation*}
\big( 1+M_1^2 -M_1^2 \, \Gamma_1 \, \nu \big) \, (1-y)^2 - \big( 1-M_1^2 \big) \, (1+\nu) \, \big( M_1^2 -y^2 \big) \, < \, 0 \, .
\end{equation*}
We now use the fact that $y$ is a root to the polynomial $Q$ in \eqref{defq1}, and write equivalently the latter inequality as
\begin{equation}
\label{ineqy1}
y \, \big( 2-M_1^2 \, \Gamma_1 \, \nu \big) \, \big( 1+M_1^2 +M_1^2 \, \Gamma_1 \big) 
\, > \, \dfrac{1}{\nu} \, \Big\{ \underbrace{-2\, \big( 1 -M_1^2 \big) +\nu \, M_1^2 \, \big( 4 +\big( 3-M_1^2 \big) \, \Gamma_1 
\big) -\nu^2 \, M_1^4 \, \Gamma_1 \, (2+\Gamma_1)}_{= \, (2-M_1^2 \, \Gamma_1 \, \nu) \, 
(M_1^2 -1+\nu \, M_1^2 \, (2+\Gamma_1))} \Big\} \, .
\end{equation}
From the inequalities \eqref{weakstab}, we know that $2-M_1^2 \, \Gamma_1 \, \nu$ is positive, and therefore, proving \eqref{ineqy1} 
amounts to showing
\begin{equation}
\label{ineqy2}
y \, > \, \dfrac{M_1^2 -1+\nu \, M_1^2 \, (2+\Gamma_1)}{\nu \, \big( 1+M_1^2 +M_1^2 \, \Gamma_1 \big)} \, ,
\end{equation}
where, recalling the definition \eqref{defy}, we know that $y$ is the positive and therefore largest root to the second degree polynomial 
$Q$ in \eqref{defq1}. The conclusion then follows from the relation
\begin{equation*}
Q \left( \dfrac{M_1^2-1 +\nu \, M_1^2 \, (2+\Gamma_1)}{\nu \, (1+M_1^2 +M_1^2 \, \Gamma_1)} \right) 
\, = \, \dfrac{ \big( 1-M_1^2 \big)^2 \, (1+\nu)}{\nu^2 \, (1+M_1^2 +M_1^2 \, \Gamma_1)^2} \, \big( 1-\nu \, M_1^2 \, (1+\Gamma_1) \big) 
\, < \, 0 \, ,
\end{equation*}
where the final inequality is a consequence of \eqref{weakstab}. This proves that the quantity on the right hand side in \eqref{ineqy2} 
lies between the two roots of $Q$, so \eqref{ineqy2} is satisfied. Rewinding the arguments above, we have verified \eqref{caracV}-(ii).
\bigskip

$\bullet$ Verifying \eqref{caracV}-(iii): factorizing $1-M_1^2$, we first observe that the velocity $V$ satisfies \eqref{caracV}-(iii) 
if and only if
\begin{equation}
\label{racineVtilde}
\Big( (k-1)^2 -M_1^2 \Big) \, V^4 +\Big( (k-1)^2 +1-2\, M_1^2 -2\, \nu \, \big( k-1+M_1^2 \big) \Big) \, v_1^2 \, V^2 
+v_1^4 \, \big( 1-M_1^2 \big) \, (1+\nu)^2 \, = \, 0 \, ,
\end{equation}
with the parameter $k$ defined as in \eqref{caracV}-(iii). We are now going to verify that the velocity $c_\star$ in \eqref{defbetac} 
satisfies \eqref{racineVtilde}. We use the relation \eqref{relation1} and substitute this expression of $c_\star^2$ (recall that $y$ is 
defined in \eqref{defy} and is a root of the polynomial $Q$ defined in \eqref{defq1}). Multiplying by the obviously nonzero quantity 
$v_1^{-4} \, (M_1^2-y^2)^2$, we are reduced to showing the relation
\begin{multline}
\label{relation2}
\Big( \big( 1-\nu \, M_1^2 \, \Gamma_1 \big)^2 -M_1^2 \Big) \, (1-y)^4 
+\big( 1-M_1^2 \big) \, (1+\nu)^2 \, \big( M_1^2 -y^2 \big)^2 \\
+\big( M_1^2-y^2 \big) \, (1-y)^2 \, \Big( \big( 1-\nu \, M_1^2 \, \Gamma_1 \big)^2 +1 -2\, M_1^2 
-2\, \nu \, \big( 1 +M_1^2 -\nu \, M_1^2 \, \Gamma_1\big) \Big) \, = \, 0 \, .
\end{multline}
The left hand side of \eqref{relation2} can be factorized as
\begin{equation*}
\nu \, Q(y) \, \big( a_2\, y^2 +a_1 \, y +a_0 \big) \, ,
\end{equation*}
where $Q$ is defined in \eqref{defq1}, and
\begin{align*}
a_2 \, & := \, 4 +\nu \, \big( 1 -M_1^2 -2\, M_1^2 \, \Gamma_1 \big) \, ,\\
a_1 \, & := \, -4 \, \big( 1+M_1^2 \big) +\nu \, M_1^2 \, \Gamma_1 \, \big( 3+M_1^2 \big) \, ,\\
a_0 \, & := \, \big( 1+M_1^2 \big)^2 -\nu \, M_1^2 \, \big( 1-M_1^2 \big) 
-\nu \, M_1^2 \, \Gamma_1 \, \big( 1+M_1^2 \big) \, .
\end{align*}
Since $Q(y)$ is zero, the left hand side of \eqref{relation2} is indeed zero, as expected. This shows that the velocity $c_\star$ in 
\eqref{defbetac} satisfies all three conditions in \eqref{caracV} and therefore coincides with $V$.
\end{proof}

Proposition \ref{prop1} may look unsatisfactory since the links between the steady Mach stem bifurcation problem and the evolutionary 
stability analysis encoded in the function $\Delta$ are still hidden. In the following Section, we explain in a more explicit way why 
determining the velocity $c_\star$ in \cite{MR2} corresponds to finding a nonzero solution $(\chi,\dot{U}) \in \C \times E^s (0,1)$ to the 
linear system \eqref{linearRH} in which the parameters $z,\eta,\overline{u}$ are respectively set equal to $0,1,-c_\star$. The existence 
of such a nontrivial solution is possible if and only if the Lopatinskii determinant vanishes, which gives a more satisfactory justification 
for Proposition \ref{prop1} than our preliminary `brute force' calculations. We also clarify below the links between the quantities in 
\eqref{defbetac} and various expressions that arise in the shock wave stability problem.

\section{Mach stems bifurcating from a steady planar shock}
\label{sect:MachStem}

In all what follows, we keep the notation $U$ for the four component vector $(\tau,{\bf u},s)$ and we keep the notation ${\bf u}=(u,v)$ 
for the velocity. We review the Mach stem bifurcation problem considered in \cite{MR2} and explain its link with the shock wave stability 
problem (and more precisely its formulation \eqref{RHequiv'}).

The problem considered in \cite{MR2} is the following: under which conditions on the shock \eqref{shock} is it possible to construct a family 
$(U_0,U_1,U_2,U_3,\Theta,\Phi,\Psi)(\eps)$, with $\eps \in [0,\eps_0]$ for some $\eps_0>0$, such that, for all $\eps \in (0,\eps_0]$, the 
wave pattern depicted in Figure \ref{fig:machstem} defines a steady Mach stem, and\footnote{Up to a rotation, the shock $\bfS_2(\eps)$ 
is always chosen as the $(0x_1)$ axis, which yields the convention for the angles $\Phi_0$ and $\Psi_0$. The location of the contact 
discontinuity with respect to $\bfS_3(\eps)$ follows from the convention $\overline{u} \le 0$, the symmetric situation being obtained by 
changing $\overline{u}$ into its opposite and $x_1$ into $-x_1$.}
\begin{align}
& \lim_{\eps \to 0} \, U_0(\eps) \, = \, \underline{U}_0 \, , & 
& \lim_{\eps \to 0} \, U_1(\eps) \, = \, \lim_{\eps \to 0} \, U_2(\eps) \, = \, \lim_{\eps \to 0} \, U_3(\eps) \, = \, \underline{U}_1 \, , \label{limitMS} \\
& \lim_{\eps \to 0} \, \Theta(\eps) \, = \, \pi \, , & 
& \lim_{\eps \to 0} \, \Phi(\eps) \, = \, \Phi_0 \in (\pi,3\, \pi/2) \, ,\notag \\
& & & \lim_{\eps \to 0} \, \Psi(\eps) \, = \, \Psi_0 \in (\Phi_0,2\, \pi) \, .\notag
\end{align}
It is also required that the family depends smoothly, say at least in a ${\mathcal C}^1$ way, on $\eps$. By a Mach stem, we 
mean that Figure \ref{fig:machstem} represents a four wave interaction at the origin where, for all $\eps \in (0,\eps_0]$:
\begin{itemize}
 \item $\bfS_1(\eps)$ is a steady shock front where $U_0(\eps)$ is the state ahead of the shock and $U_2(\eps)$ the state behind,
 
 \item $\bfS_2(\eps)$ is a steady shock front where $U_0(\eps)$ is the state ahead of the shock and $U_1(\eps)$ the state behind,
 
 \item $\bfS_3(\eps)$ is a steady shock front where $U_1(\eps)$ is the state ahead of the shock and $U_3(\eps)$ the state behind,
 
 \item ${\bf CD}(\eps)$ is a steady contact discontinuity.
\end{itemize}
In that case, the wave pattern depicted in Figure \ref{fig:machstem} yields a weak solution to the steady Euler equations. The following 
\emph{causality} conditions are also imposed for all $\eps \in (0,\eps_0]$, see \cite[page 123]{MR2}:
\begin{itemize}
 \item The tangential velocity of the fluid along $\bfS_3(\eps)$ points away from the origin.
 
 \item The velocity on either side of ${\bf CD}(\eps)$ points away from the origin.
\end{itemize}

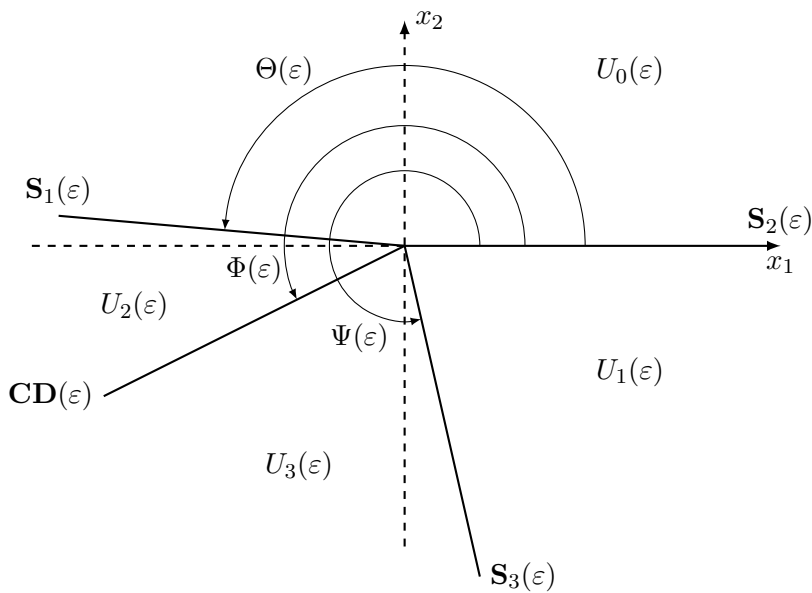
\begin{figure}[h!]
\begin{center}
\begin{tikzpicture}[scale=2,>=latex]
\pgfmathsetmacro{\theta}{ {4*atan(1)-atan(2/23)} }
\pgfmathsetmacro{\phi}{ {atan(1/2)+4*atan(1)} }
\pgfmathsetmacro{\psi}{ {8*atan(1)-atan(22/5)} }
\draw[thick,->] (0,0) -- (2.5,0) node[below] {$x_1$};
\node[above] at (2.5,0) {$\bfS_2(\eps)$};
\draw[thick,dashed,->] (0,-2)--(0,1.5) node[right] {$x_2$};
\draw[thick,dashed] (0,0) -- (-2.5,0);
\draw[thick] (0,0) -- (-2.3,0.2);
\node[above] at (-2.3,0.2) {$\bfS_1(\eps)$};
\node[above] at (1.5,1) {$U_0(\eps)$};
\node[above] at (1.5,-1) {$U_1(\eps)$};
\node[left] at (-1.5,-0.4) {$U_2(\eps)$};
\node[below] at (-0.7,-1.3) {$U_3(\eps)$};
\draw[thick] (0,0) -- (0.5,-2.2);
\node[right] at (0.5,-2.2) {$\bfS_3(\eps)$};
\draw[thick] (0,0) -- (-2,-1);
\node[left] at (-2,-1) {${\bf CD}(\eps)$};
\draw[->] (1.2,0) arc (0:\theta:1.2);
\node[above] at (-0.8,1) {$\Theta(\eps)$};
\draw[->] (0.8,0) arc (0:\phi:0.8);
\node[above] at (-1,-0.32) {$\Phi(\eps)$};
\draw[->] (0.5,0) arc (0:\psi:0.5);
\node[below] at (-0.3,-0.45) {$\Psi(\eps)$};
\end{tikzpicture}
\caption{The Mach stem configuration with a shallow angle.}
\label{fig:machstem}
\end{center}
\end{figure}

\noindent By the so-called triple shock Theorem \cite{HendersonMenikoff}, $\bfS_1(\eps)$ should be the shock with the largest 
amplitude while $\bfS_2(\eps)$ will have a slightly smaller amplitude. In other words, we should have:
$$
p(U_0(\eps)) \, < \, p(U_1(\eps)) \, < \, p(U_2(\eps)) \, = \, p(U_3(\eps)) \, .
$$

There is of course much freedom in the parametrization of the family of Mach stems (and we shall see below that even with the 
choice of one parametrization, there can be more than one family of Mach stems that satisfy the asymptotic behavior \eqref{limitMS}). 
As will follow from the proof of Theorem \ref{thm1} below, it turns out that a convenient  way to parametrize the family of Mach stems 
amounts to choosing $\pi -\Theta$ as the `bifurcation parameter'. In other words, we ask from now on the angle $\Theta$ to be given 
by
\begin{equation}
\label{Theta}
\Theta(\eps) \, := \, \pi -\eps\, .
\end{equation}
Why $\eps$ should be positive, and therefore $\Theta$ in \eqref{Theta} less than $\pi$, will be made clear in the proof of Theorem 
\ref{thm1} below (see also \cite{MR2} for similar considerations).

The main conclusion in \cite{MR2} is that such a `shock to Mach stem' bifurcation occurs if and only if the inequalities 
\eqref{weakstab} are satisfied and the tangential velocity $\overline{u}$ in \eqref{RH1} equals $-c_\star$, with $c_\star$ 
defined by \eqref{eqPhi}, \eqref{defbetac}. The proof of the `if' part in \cite{MR2} is skipped. Let us observe that because 
of our result in Proposition \ref{prop1}, the requirement on $\overline{u}$ in \cite{MR2} can be rewritten as $\overline{u}=-V$ 
where $V>0$ is characterized by \eqref{caracV}. Our main conclusion is of course in agreement with \cite{MR2} and is 
summarized as follows (we recall the convention $\overline{u} \le 0$).

\begin{theorem}
\label{thm1}
Assume that there exists a family of Mach stems depending smoothly on $\eps \in [0,\eps_0]$ where $\eps$ is given by \eqref{Theta}, 
and having the asymptotic behavior \eqref{limitMS} where the steady shock \eqref{shock} satisfies \eqref{RH}. Then there exists a 
solution $\dot{U} \in E^s(0,1)$ to the linear system \eqref{linearRH} in which one specifies $z=0$, $\eta=1$ and $\chi=1$. In particular, 
there holds:
\begin{equation*}
\dfrac{1}{1+\Gamma_1} \le M_1^2 \, \left( \dfrac{\tau_0}{\tau_1}-1 \right) < \dfrac{1+M_1}{\Gamma_1} \, ,
\end{equation*}
and $\overline{u}=-V$, where the velocity $V$ is characterized by \eqref{caracV}. Equivalently, the step shock \eqref{shock}, \eqref{RH} 
with tangential velocity $\overline{u}$ can bifurcate into a Mach stem with shallow angle only if there holds $\Delta (\overline{u},0,1)=0$.
\bigskip

Conversely, if the steady shock \eqref{shock} satisfies \eqref{RH} together with the strict inequalities \eqref{weakstab}, and if furthermore 
$\overline{u}=-V$ where the velocity $V$ is characterized by \eqref{caracV}, then there exists a one parameter family of Mach stems 
satisfying \eqref{limitMS}, \eqref{Theta}, and the state $U_0(\eps)$ ahead of the Mach stem can be chosen to have the particular form
\begin{equation}
\label{formU0eps}
\forall \, \eps \in [0,\eps_0] \, ,\quad U_0(\eps) \, = \, (\tau_0,u(\eps),v_0,s_0) \, .
\end{equation}
\end{theorem}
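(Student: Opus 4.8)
The plan is to treat the two implications separately: I would obtain the necessary condition by differentiating the configuration at $\eps=0$ and reading off a linearized solution, and prove the converse by a bifurcation argument whose solvability is governed by Lemma \ref{determinant-velocity} together with the classification following \eqref{weakstab}.

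For the direct implication I would exploit that at $\eps=0$ the whole pattern collapses onto the reference shock \eqref{shock}: by \eqref{limitMS} and \eqref{Theta} the fronts $\bfS_1$ and $\bfS_2$ become collinear (the full $x_1$-axis), while ${\bf CD}$ and $\bfS_3$ degenerate into zero-strength waves issued at the limiting angles $\Phi_0,\Psi_0$. The first step is to differentiate the steady Rankine--Hugoniot relations across $\bfS_1$ and $\bfS_2$ at $\eps=0$. Since $\bfS_2$ is pinned to the $x_1$-axis, its linearization is ${\rm d}f_2(\underline U_1)\,\dot U_1 = {\rm d}f_2(\underline U_0)\,\dot U_0$; whereas $\bfS_1$ tilts with $\partial_\eps\Theta=-1$, so its front perturbs as $\dot\psi_1=-x_1$ and its linearization carries the inhomogeneous term $\partial_{x_1}\dot\psi_1\,\bigl(f_1(\underline U_1)-f_1(\underline U_0)\bigr)=-\bigl(f_1(\underline U_1)-f_1(\underline U_0)\bigr)$. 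Subtracting the two relations eliminates $\dot U_0$ and yields ${\rm d}f_2(\underline U_1)\,(\dot U_2-\dot U_1)=-\bigl(f_1(\underline U_1)-f_1(\underline U_0)\bigr)$, which is exactly \eqref{RHequiv} — equivalently \eqref{linearRH} — with $z=0$, $\eta=1$, $\chi=1$ and $\dot U=\dot U_2-\dot U_1$.

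The crux of the direct implication is then to show $\dot U_2-\dot U_1\in E^s(0,1)$. Here I would split $\dot U_2-\dot U_1=(\dot U_2-\dot U_3)+(\dot U_3-\dot U_1)$ and identify each piece with an eigenmode. The jump $\dot U_2-\dot U_3$ across the vanishing contact ${\bf CD}$ carries neither pressure nor normal-velocity variation, hence lies in the entropy/vorticity space $E_0(0,1)$ of \eqref{eigenspaces01}; consistently, the limiting angle $\Phi_0\in(\pi,3\pi/2)$ is forced to be the flow direction $(\overline u,v_1)$, which is precisely the velocity generator of $E_0(0,1)$. The jump $\dot U_3-\dot U_1$ across the vanishing shock $\bfS_3$ is, to leading order, an acoustic eigenvector and lies in $E_-(0,1)$; the causality requirement from \cite{MR2} that $\bfS_3$ point away from the origin selects exactly the root $\omega_-(0,1)$ of \eqref{eigenmode01} rather than its companion, so $\Psi_0$ is the associated characteristic angle. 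This is where the causality conditions of \cite{MR2} coincide with the outgoing-mode selection of \cite{MR1}. Thus $\dot U_2-\dot U_1\in E_0(0,1)\oplus E_-(0,1)=E^s(0,1)$, the pair $(\chi,\dot U)=(1,\dot U_2-\dot U_1)$ is nonzero, and by definition $\Delta(\overline u,0,1)=0$. Lemma \ref{determinant-velocity} then gives $\Delta(0,\overline u,1)=0$, and the classification following \eqref{weakstab} forces the (closed) weak-stability inequalities together with $\overline u=-V$.

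For the converse I would set up the finite-dimensional nonlinear system $F(W,\eps)=0$ collecting the Rankine--Hugoniot relations across $\bfS_1,\bfS_2,\bfS_3$, the contact conditions across ${\bf CD}$, and the geometric closure at the origin, with unknowns $W=(U_1,U_2,U_3,u,\Phi,\Psi)$ and $\eps$ prescribed by \eqref{Theta}; the gauge \eqref{formU0eps} freezes $\tau_0,v_0,s_0$ and keeps $u$ as the single free component of $U_0$. At $(W_0,0)$ the pattern is the reference shock, $F(W_0,0)=0$, but $D_WF(W_0,0)$ is singular precisely because ${\bf CD}$ and $\bfS_3$ collapse, its one-dimensional cokernel being generated by the same solvability pairing that defines $\Delta(\overline u,0,1)$. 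The main obstacle is this degeneracy, which defeats a naive implicit function theorem. I would resolve it by Lyapunov--Schmidt reduction, rescaling the (order $\eps$) strengths of ${\bf CD}$ and $\bfS_3$ so that the reduced bifurcation equation becomes a scalar relation whose linear-in-$\eps$ solvability is exactly $\Delta(\overline u,0,1)=0$; the strict inequalities \eqref{weakstab} guarantee that $V$ is a simple, non-glancing root, so the transversality coefficient is nonzero and the reduced equation can be solved for a $\mathcal C^1$ branch $W(\eps)$ on $[0,\eps_0]$. A final second-order computation fixes the sign, showing that Lax-compatible and causal Mach stems exist only for $\eps>0$ (consistent with \eqref{Theta}) and that $p(U_0)<p(U_1)<p(U_2)=p(U_3)$. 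Verifying that this transversality coefficient is nonzero — equivalently, that the $\eps$-derivative genuinely leaves the kernel of $D_WF(W_0,0)$ — is the technical heart of the converse.
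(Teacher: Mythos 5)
Your \emph{necessity} argument is essentially the paper's own proof: linearize the Rankine--Hugoniot relations on $\bfS_1,\bfS_2$ to get $U_1'-U_2'={\rm d}f_2(\underline{U}_1)^{-1}\big(f_1(\underline{U}_1)-f_1(\underline{U}_0)\big)$, split this jump into a contact piece in $E_0(0,1)$ and an acoustic piece in $E_-(0,1)$, use the causality conditions of \cite{MR2} to pin down $\Phi_0$ and to select the root $\omega_-(0,1)$ (this is exactly how the paper identifies $\Psi_0$ via \eqref{determinangle} and verifies \eqref{sincosPsi_0}), and conclude $\Delta(\overline{u},0,1)=0$, hence $\overline{u}=-V$ via Lemma \ref{determinant-velocity} and the classification following \eqref{weakstab}. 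That half is sound.

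The gap is in the converse. Your Lyapunov--Schmidt skeleton matches the paper's construction (the paper solves for all states and angles by repeated implicit function theorems, reduces to the scalar collinearity equation $\delta(\eps,U)=0$ of \eqref{defdelta}, factors $\delta=\eps\,\widetilde{\delta}$, and notes that the compatibility condition $\widetilde{\delta}(0,\underline{U}_0)=0$ is equivalent to $\Delta(\overline{u},0,1)=0$, i.e.\ to $\overline{u}=-V$). But you then assert that the transversality coefficient is nonzero \emph{because} the strict inequalities \eqref{weakstab} make $V$ a simple, non-glancing root of the Lopatinskii determinant. This inference is not valid: simplicity of the root concerns $\partial_z\Delta(0,\cdot,1)$, a derivative in the \emph{frequency} variable, whereas the coefficient you need is $\partial^2_{\eps,u}\delta(0,\underline{U}_0)$, a derivative in the \emph{state} variable $u$ (the upstream tangential velocity), and there is no a priori relation between the two. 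In the paper this nonvanishing is precisely the hard part: it requires computing the decomposition coefficients $\alpha_0,\alpha_-$ in \eqref{defalpha0-}, the derivatives $\partial_u\lambda(0,\underline{U}_0)$, $\partial_u{\mathcal R}(\underline{U}_1)$ and $\partial^2_{\eps,u}\U_2(0,\underline{U}_0)$, and assembling them into the final expression, which up to a nonzero factor reads $\Omega_0+\nu\,M_1\,\Omega_1$ with both $\Omega_0,\Omega_1>0$ for $M_1\in(0,1)$, $\beta\in(-1,M_1)$. Without this (or some genuinely different argument), the branch $W(\eps)$ does not exist as claimed. A second, related omission: showing that $\bfS_3(\eps)$ satisfies the Lax inequalities only for $\eps>0$ is not a formal ``sign-fixing''; it uses genuine nonlinearity of the acoustic fields, i.e.\ ${\mathcal G}_1>0$ coming from the assumption $\partial^3 e/\partial\tau^3<0$, together with the sign $\alpha_-<0$ (see \eqref{relationPsi'0} and Step 5 of the paper's proof). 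Since a stated purpose of the theorem is to make the pressure-law hypotheses explicit, this step cannot be waved through.
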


Our proof slightly differs from \cite{MR2} and intends to clarify the link between the Mach stem bifurcation analysis and the relation 
\eqref{RHequiv'} which deals with the normal mode analysis for the shock wave stability problem. Let us recall for future use that 
the validity of \eqref{RHequiv'} is equivalent to the existence of a nonzero pair $(\chi,\dot{U}) \in \C \times E^s(z,\eta)$ satisfying 
\eqref{linearRH}, which is also equivalent to $\Delta (\overline{u},z,\eta)=0$. In the case where $z$ is real, this corresponds to a 
weak stability property for the shock wave \eqref{shock}. In the proof of Theorem \ref{thm1} below, we clarify some of the computations 
and arguments in \cite{MR2} and, above all, we provide a complete proof of the `if' part in Theorem \ref{thm1} without any additional 
assumption on the pressure law than those made in Section \ref{normal}. The assumptions on the pressure law were not complete 
in \cite{MR2}.

\begin{proof}[Proof of Theorem \ref{thm1}]
We first prove the `necessity' part of Theorem \ref{thm1} and assume that a smooth family of Mach stems with shallow angle is given. 
We first introduce some notation and write\footnote{In \cite{MR2}, the state $U_0'$ is \emph{assumed} to have only one nonzero 
component, which corresponds to the tangential velocity. This assumption will be justified when we construct the family of Mach 
stems but it is actually not needed at this point of the argument.}:
\begin{equation*}
U_0(\eps) \, = \, \underline{U}_0 +\eps \, U_0' +o(\eps) \, ,\qquad U_j(\eps) \, = \, \underline{U}_1 +\eps \, U_j' +o(\eps) \, , \, 
\quad j \, = \, 1,2,3 \, ,
\end{equation*}
where $\underline{U}_0,\underline{U}_1$ are the two states of the step shock in \eqref{shock}. Following \cite[page 130]{MR2}, 
we write the stationary Rankine-Hugoniot conditions for each of the four discontinuities $\bfS_{1,2,3}(\eps)$, ${\bf CD}(\eps)$ 
and expand at the first order in $\eps$. This yields
\begin{align}
& {\rm d}f_2(\underline{U}_0) \, U_0' -{\rm d}f_2(\underline{U}_1) \, U_1' \, = \, 0 \, ,\quad 
f_1(\underline{U}_0) -f_1(\underline{U}_1) +{\rm d}f_2(\underline{U}_0) \, U_0' -{\rm d}f_2(\underline{U}_1) \, U_2' \, = \, 0 \, ,\label{saut1} \\
&\Big( -\sin \Psi_0 \, {\rm d}f_1(\underline{U}_1) +\cos \Psi_0 \, {\rm d}f_2(\underline{U}_1) \Big) \, (U_1' -U_3') \, = \, 0 \, ,\label{saut2} \\
&\Big( -\sin \Phi_0 \, {\rm d}f_1(\underline{U}_1) +\cos \Phi_0 \, {\rm d}f_2(\underline{U}_1) \Big) \, (U_3' -U_2') \, = \, 0 \, .\label{saut3}
\end{align}
We can also pass to the limit in the zero normal velocity constraint for the contact discontinuity ${\bf CD}(\eps)$ and in Lax shock 
inequalities for $\bfS_3(\eps)$, which yields (recall ${\bf u}_1=(\overline{u},v_1)$):
\begin{equation}
\label{egalPhiPsi}
-\overline{u} \, \sin \Phi_0 +v_1 \, \cos \Phi_0 \, = \, 0 \, ,\quad 
-\overline{u} \, \sin \Psi_0 +v_1 \, \cos \Psi_0 \, = \, -c_1 \, .
\end{equation}
Our goal now is to analyze the relations \eqref{saut1}, \eqref{saut2}, \eqref{saut3} and \eqref{egalPhiPsi}. Observe in particular 
that we shall never use the precise expression of $U_0'$, meaning that the assumption made in \cite{MR2} that $U_0'$ only has 
one nonzero component is useless.

We can first equivalently rewrite \eqref{saut1} as
\begin{subequations}
\begin{align}
& {\rm d}f_2(\underline{U}_0) \, U_0' -{\rm d}f_2(\underline{U}_1) \, U_1' \, = \, 0 \, ,\label{saut4a} \\
& U_1' -U_2' \, = \, {\rm d}f_2(\underline{U}_1)^{-1} \, \big( f_1(\underline{U}_1) -f_1(\underline{U}_0) \big) \, .\label{saut4b}
\end{align}
\end{subequations}
Equation \eqref{saut4a} will ultimately determine $U_0'$ once we know $U_1'$. Observe now that $U_0'$ does not appear in 
\eqref{saut4b}, \eqref{saut2} and \eqref{saut3} so we may forget about $U_0'$ from now on and focus on the vectors $U_j'$, 
$j=1,2,3$. We are going to show below that the relations \eqref{saut2}, \eqref{saut3} imply $U_1' -U_2' \in E^s(0,1)$ where the 
stable subspace $E^s(0,1)$ has been defined in Section \ref{normal}, see \eqref{eigenspaces01}. Therefore \eqref{saut4b} will 
give
$$
{\rm d}f_2(\underline{U}_1)^{-1} \, \big( f_1(\underline{U}_1) -f_1(\underline{U}_0) \big) \in E^s(0,1) \, ,
$$
which is exactly \eqref{RHequiv'} with $z=0$ and $\eta=1$. In other words, we shall have $\Delta (\overline{u},0,1)=0$, which 
equivalently means that \eqref{weakstab} holds and $\overline{u} =-V$ with our convention for the sign of $\overline{u}$. Let us 
make all these arguments precise.

We are first going to make the relations \eqref{saut2}, \eqref{saut3} more explicit. Recalling that $U$ denotes the vector 
$(\tau,{\bf u},s)$, we compute
\begin{equation*}
{\rm d}f_1(U) \, = \, P(U)^{-1} \, \begin{bmatrix}
u & -\tau & 0 & 0 \\
\dfrac{-c^2}{\tau} & u & 0 & \Gamma \, T \\
0 & 0 & u & 0 \\
0 & 0 & 0 & u \end{bmatrix} \, ,\quad 
{\rm d}f_2(U) \, = \, P(U)^{-1} \, \begin{bmatrix}
v & 0 & -\tau & 0 \\
0 & v & 0 & 0 \\
\dfrac{-c^2}{\tau} & 0 & v & \Gamma \, T \\
0 & 0 & 0 & v \end{bmatrix} \, ,
\end{equation*}
with
\begin{equation}
\label{defPU}
P(U) \, := \, \begin{bmatrix}
-\tau^2 & 0 & 0 & 0 \\
-\tau \, u & \tau & 0 & 0 \\
-\tau \, v & 0 & \tau & 0 \\
\dfrac{\tau^2}{T} \, \left( \rho \, \left( \dfrac{|{\bf u}|^2}{2} -e \right) -p \right) & \dfrac{-\tau \, u}{T} 
& \dfrac{-\tau \, v}{T} & \dfrac{\tau}{T} \end{bmatrix} \, .
\end{equation}
Multiplying \eqref{saut3} by $P(\underline{U}_1)$ on the left and using the first equation in \eqref{egalPhiPsi}, \eqref{saut3} is 
seen to be equivalent to
\begin{equation*}
U_3' -U_2' \in \text{\rm Span } \left\{ \, 
\begin{bmatrix}
0 \\
\cos \Phi_0 \\
\sin \Phi_0 \\
0 \end{bmatrix} \, , \, \begin{bmatrix}
\Gamma_1 \, T_1 \, \tau_1 \\
0 \\
0 \\
c_1^2 \end{bmatrix} \, \right\} \, ,\quad \text{\rm with } \quad \dfrac{\cos \Phi_0}{\sin \Phi_0} \, = \, \dfrac{\overline{u}}{v_1} \, .
\end{equation*}
Recalling the decomposition \eqref{eigenspaces01} of the vector space $E_0(0,1)$, it thus turns out that \eqref{saut3} equivalently reads 
$U_3' -U_2' \in E_0(0,1)$.

Let us now turn to \eqref{saut2}. Multiplying \eqref{saut2} by $P(\underline{U}_1)$ on the left and using the second equation in 
\eqref{egalPhiPsi}, \eqref{saut2} is seen to be equivalent to
\begin{equation}
\label{conditionsaut}
U_1' -U_3' \in \text{\rm Span } \begin{bmatrix}
\tau_1 \\
c_1 \, \sin \Psi_0 \\
-c_1 \, \cos \Psi_0 \\
0 \end{bmatrix} \, ,
\end{equation}
where the angle $\Psi_0$ satisfies the second equation in \eqref{egalPhiPsi}. We wish to show that \eqref{conditionsaut} equivalently 
reads $U_1' -U_3' \in E_-(0,1)$ where the vector space $E_-(0,1)$ is given in \eqref{eigenspaces01}.  We thus need to show that the 
two vectors
$$
\begin{bmatrix}
\tau_1 \, (\overline{u}+v_1 \, \omega_-(0,1)) \\
c_1^2 \\
c_1^2 \, \omega_-(0,1) \\
0 \end{bmatrix} \, ,\quad \begin{bmatrix}
\tau_1 \\
c_1 \, \sin \Psi_0 \\
-c_1 \, \cos \Psi_0 \\
0 \end{bmatrix} \, ,
$$
are collinear, or in other words, we need to show the relations
\begin{equation}
\label{sincosPsi_0}
\sin \Psi_0 \, = \, \dfrac{c_1}{\overline{u} +v_1 \, \omega_-(0,1)} \, ,\quad 
\cos \Psi_0 \, = \, -\dfrac{c_1 \, \omega_-(0,1)}{\overline{u} +v_1 \, \omega_-(0,1)} \, .
\end{equation}
This requires further knowledge on the angle $\Psi_0$ than just \eqref{egalPhiPsi}. We are therefore going to determine the angle 
$\Psi_0$ completely.

By the causality conditions for the family of Mach stems, the velocity along the contact discontinuity ${\bf CD}(\eps)$ must point away 
from the origin. Passing to the limit $\eps \to 0$, this means that the velocity ${\bf u}_1$ is a positive multiple of the unit vector 
$(\cos \Phi_0,\sin \Phi_0)$. In other words, we have
$$
\cos \Phi_0 \, = \, \dfrac{\overline{u}}{\sqrt{\overline{u}^2+v_1^2}} \, ,\quad \quad 
\sin \Phi_0 \, = \, \dfrac{v_1}{\sqrt{\overline{u}^2+v_1^2}} \, .
$$
Let us now recall that the angle $\Psi_0$ should satisfy $\Psi_0 \in (\Phi_0,2\, \pi)$ so we must have
$$
\cos \Psi_0 \in (\cos \Phi_0,1) \, ,\quad \text{\rm and } \quad \sin \Psi_0 < 0 \, .
$$
Let us now go back to \eqref{egalPhiPsi}. Since $\sin \Psi_0$ is negative, $\cos \Psi_0$ must satisfy
\begin{equation}
\label{eqbeta}
-(c_1+v_1 \, \cos \Psi_0) \, = \, \overline{u} \, \sqrt{1-\cos^2 \Psi_0} \, < \, 0 \, .
\end{equation}
Moreover, the existence of $\Psi_0$ implies, by the Cauchy-Schwarz inequality, the condition\footnote{This means that the velocity field 
${\bf u}_1$ behind the shock in \eqref{shock} is supersonic though the normal velocity $v_1$ is subsonic by Lax shock inequalities.}
\begin{equation}
\label{hyperbolic}
\overline{u}^2 \ge c_1^2 -v_1^2 \, .
\end{equation}
Solving \eqref{eqbeta} for $\cos \Psi_0$, we obtain the two possible values
$$
\cos \Psi_0 \, = \, \dfrac{-c_1 \, v_1 \pm \overline{u} \, \sqrt{\overline{u}^2 +v_1^2 -c_1^2}}{\overline{u}^2 +v_1^2} \, ,
$$
both values belonging to the interval $(\cos \Phi_0,1)$. Since both possible values correspond to a positive value of 
$c_1+v_1 \, \cos \Psi_0$, we obtain
$$
\sin \Psi_0 \, = \, -\sqrt{1-\cos^2 \Psi_0} \, = \, \dfrac{c_1+v_1 \, \cos \Psi_0}{\overline{u}} \, = \, 
\dfrac{c_1 \, \overline{u} \pm v_1 \, \sqrt{\overline{u}^2 +v_1^2 -c_1^2}}{\overline{u}^2 +v_1^2} \, ,
$$
where the choice between $\pm$ is the same in both expressions of $\cos \Psi_0$ and $\sin \Psi_0$. At this stage, it is unfortunately 
still not possible to select the appropriate determination of $\Psi_0$ between the two possible choices. However, the causality conditions 
for the family of Mach stems imply
\begin{equation*}
\forall \, \eps \in (0,\eps_0] \, ,\quad {\bf u}_1(\eps) \cdot \big( \cos \Psi(\eps),\sin \Psi(\eps) \big) \ge 0 \, ,
\end{equation*}
so passing to the limit, we must have
\begin{equation*}
\overline{u} \, \cos \Psi_0 +v_1 \, \sin \Psi_0 \ge 0 \, .
\end{equation*}
Therefore the appropriate determination of $\Psi_0$ corresponds to
\begin{equation}
\label{determinangle}
\cos \Psi_0 \, = \, \dfrac{-c_1 \, v_1 +\overline{u} \, \sqrt{\overline{u}^2 +v_1^2 -c_1^2}}{\overline{u}^2 +v_1^2} \, ,\quad 
\sin \Psi_0 \, = \, \dfrac{c_1 \, \overline{u} +v_1 \, \sqrt{\overline{u}^2 +v_1^2 -c_1^2}}{\overline{u}^2 +v_1^2} \, ,
\end{equation}
which amounts to choosing the smallest possible value of $\cos \Psi_0$ among the two possible ones, see the geometric 
interpretation in Figure \ref{fig:angle}.

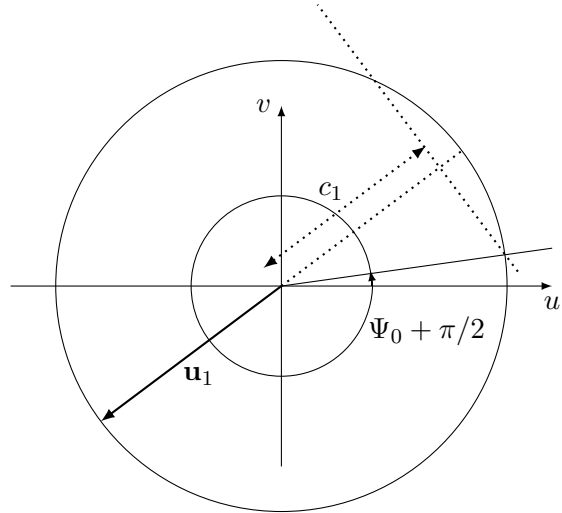
\begin{figure}[h!]
\begin{center}
\begin{tikzpicture}[scale=1.2,>=latex]
\draw[thin,->] (-3,0) -- (3,0);
\node[below] at (3,0) {$u$};
\draw[thin,->] (0,-2) -- (0,2);
\node[left] at (0,2) {$v$};
\draw[thick,->] (0,0) -- (-2,-1.5);
\draw[thin] (2.5,0) arc (0:360:2.5);
\node[right] at (-1.2,-1) {${\bf u}_1$};
\draw[thick,dotted] (0,0) -- (2,1.5);
\draw[thick,dotted,<->] (-0.2,0.2) -- (1.6,1.55);
\node[left] at (0.8,1.05) {$c_1$};
\draw[thick,dotted] (2.62,0.16) -- (0.4,3.12);
\draw[->] (1,0) arc (0:370:1);
\draw (0,0) -- (3,0.42);
\node[right] at (0.85,-0.5) {$\Psi_0+\pi/2$};
\end{tikzpicture}
\caption{Determining the angle $\Psi_0$. One determines the two possible angles $\Psi_0+\pi/2$ by the scalar product condition 
in \eqref{egalPhiPsi} and picks up the smallest value. When $c_1$ equals $|{\bf u}_1|$, the two possible values of $\Psi_0+\pi/2$ 
become equal.}
\label{fig:angle}
\end{center}
\end{figure}

Recalling now the expression \eqref{eigenmode01} of $\omega_-(0,1)$ and the expressions \eqref{determinangle} for the sine and 
cosine of $\Psi_0$, it is a simple exercise to verify that the relations \eqref{sincosPsi_0} hold. Going then back to \eqref{conditionsaut}, 
we have obtained $U_1' -U_3' \in E_-(0,1)$, and combining with $U_3' -U_2' \in E_0(0,1)$, we have shown $U_1' -U_2' \in E^s(0,1)$.

Summarizing what we have done so far, we have shown that the existence of a family of Mach stems satisfying \eqref{limitMS}, \eqref{Theta} 
and the causality conditions implies that the tangential velocity $\overline{u}$ satisfies \eqref{hyperbolic} and that there exists a vector $U_1' 
-U_2' \in E^s(0,1)$ such that \eqref{saut4b} holds. This means that the relation \eqref{RHequiv'} holds for the frequencies $(z,\eta)=(0,1)$ 
hence the step shock \eqref{shock} satisfies $\Delta (\overline{u},0,1)=0$. Using Lemma \ref{determinant-velocity}, we thus have 
$\Delta (0,\overline{u},1)=0$ and we shall now use the conclusions of the stability analysis for shock waves with zero tangential velocity.

There are two cases. Either \eqref{hyperbolic} holds with an equality sign, and in that case, one gets\footnote{This limit case corresponds 
to a Lopatinskii determinant that vanishes at a glancing frequency. At such frequencies, the eigenmode $\omega_-$ is not locally a smooth 
function of $(z,\eta)$.}:
\begin{equation*}
\overline{u} \, = \, -\sqrt{c_1^2-v_1^2} \, ,\quad \Delta (0,\overline{u},1) \, = \, 0 \, .
\end{equation*}
This situation occurs only if (we refer again to \cite[chapter 15]{BS})
\begin{equation*}
\dfrac{1}{1+\Gamma_1} \, = \, M_1^2 \, \left( \dfrac{\tau_0}{\tau_1}-1 \right) \, ,
\end{equation*}
which is the limit case \eqref{weakstablim} for \eqref{weakstab}. Or \eqref{hyperbolic} holds with a strict inequality, which means that the 
Lopatinskii determinant $\Delta (0,\cdot,1)$ vanishes in the hyperbolic region. In that case, the strict inequalities \eqref{weakstab} hold and 
we necessarily have $\overline{u}=-V$, see again \cite[chapter 15]{BS} for further details. This completes the proof of the `necessity' part of 
Theorem \ref{thm1}.
\bigskip

We are now going to give a complete construction of a family of Mach stems satisfying \eqref{limitMS}, \eqref{Theta} and the causality 
conditions. We are even going to show that the state $U_0(\eps)$ ahead of the Mach stem can be chosen to have the particular form 
\eqref{formU0eps} for some smooth function $u$ verifying $u(0)=\overline{u}$. This justifies the calculations made in \cite{MR2}. We 
assume from now on that the inequalities \eqref{weakstab} hold and that the tangential velocity $\overline{u}$ equals $-V$ with $V$ 
characterized by \eqref{caracV}. Where all these assumptions come into play will be made precise below. The construction of the family 
of Mach stems splits in several steps. Let us recall some important facts. The shocks ${\bf S}_1(\eps)$ and ${\bf S}_2(\eps)$ will have a 
`large' amplitude, meaning that their amplitude will not tend to zero with $\eps$. At the opposite, both discontinuities ${\bf S}_3(\eps)$ and 
${\bf CD}(\eps)$ will have small amplitude, which explains why we deal with those two sets of discontinuities by using separate arguments.

We now wish to construct a one parameter family of Mach stems satisfying \eqref{limitMS}, \eqref{Theta}. We shall see later 
on which degrees of freedom on the state $U_0$ can be spared. The family of Mach stems will be constructed by repeatedly 
applying the implicit function Theorem (and at the very end of the argument, a somehow degenerate version of that Theorem). 
Some arguments in the construction are inspired from \cite[chapter 4]{serre2}. We split the whole construction in several steps.
\bigskip

$\bullet$ \underline{Step 1. The large amplitude shocks.}

Let us first set some notation. By applying the implicit function Theorem\footnote{For simplicity, we assume that 
the specific internal energy $e$ is a ${\mathcal C}^\infty$ function of $(\tau,s)$ so all functions involved in the 
calculations below have ${\mathcal C}^\infty$ regularity.}, we know that for all $U$ close to $\underline{U}_0$ 
and for all $\eps$ close to $0$, there exist some uniquely determined states $\U_1(U)$ and $\U_2(\eps,U)$ that 
are close to $\underline{U}_1$ and satisfy the Rankine-Hugoniot relations
\begin{align}
&f_2(\U_1(U)) -f_2(U) \, = \, 0 \, ,\quad 
\big( f_1(\U_2(\eps,U)) -f_1(U) \big) \sin \eps +\big( f_2(\U_2(\eps,U)) -f_2(U) \big) \cos \eps \, = \, 0 \, ,\label{defU12} \\
&\U_1(\underline{U}_0) \, = \, \U_2(0,\underline{U}_0) \, = \, \underline{U}_1 \, .\notag
\end{align}
By uniqueness in the implicit function Theorem, we even have the relation
\begin{equation*}
\U_2(0,U) \, = \, \U_1(U) \, ,
\end{equation*}
for all $U$ sufficiently close to $\underline{U}_0$. The relations \eqref{defU12} correspond to admissible discontinuities 
for the stationary Euler equations across the lines $\{ x_2=0 \}$ and $\{ \sin \eps \, x_1 +\cos \eps \, x_2 =0 \}$, as depicted 
in Figure \ref{fig:machstem}. At the very end of the analysis, we shall specify the state $U$ close to $\underline{U}_0$ by 
choosing $U=U_0(\eps)$ for some appropriate smooth function $U_0$. Then the discontinuity:
$$
\begin{cases}
U_0(\eps) \, , & x_2>0 \, ,\\
\U_1(U_0(\eps)) \, , & x_2<0 \, ,
\end{cases}
$$
will satisfy the Rankine-Hugoniot jump conditions for \eqref{euler}, and since $(U_0(\eps),\U_1(U_0(\eps)))$ will be close 
to $(\underline{U}_0,\underline{U}_1)$, this discontinuity will necessarily be a shock wave since it will have a nonzero mass 
flux across $\{ x_2=0 \}$ and it will satisfy Lax shock inequalities. In the same way, the discontinuity:
$$
\begin{cases}
U_0(\eps) \, , & \sin \eps \, x_1 +\cos \eps \, x_2>0 \, ,\\
\U_2(\eps,U_0(\eps)) \, , & \sin \eps \, x_1 +\cos \eps \, x_2<0 \, ,
\end{cases}
$$
will be a shock wave for \eqref{euler}. For both discontinuities, $U_0(\eps)$ will be the upstream state. Up to the choice of 
the function $(\eps \mapsto U_0(\eps))$, which will be specified later on, this means that we have already constructed the 
two large amplitude shocks of the Mach stem.
\bigskip

$\bullet$ \underline{Step 2. The small amplitude shock.}

Let us now move on to the small amplitude shock $\bfS_3(\eps)$ in the Mach stem configuration. The arguments follow 
\cite[chapter 4]{serre2} here. Given the state $\underline{U}_1$ with $\overline{u}=-V$ (and therefore $\overline{u}^2 +v_1^2>c_1^2$), 
we define the angle $\Psi_0 \in (\pi,2\, \pi)$ by \eqref{determinangle}. In particular, $\Psi_0$ satisfies the second equation 
in \eqref{egalPhiPsi} and there holds
\begin{equation*}
\overline{u} \, \cos \Psi_0 +v_1 \, \sin \Psi_0 \, = \, \sqrt{\overline{u}^2 +v_1^2 -c_1^2} \, \neq \, 0 \, ,
\end{equation*}
use \eqref{determinangle}. For all pair $(U_1,U_3)$ sufficiently close to $(\underline{U}_1,\underline{U}_1)$, we define the matrices
\begin{equation}
\label{defA1A2}
A_j(U_1,U_3) \, := \, \int_0^1 {\rm d}f_j \big( t\, U_1+(1-t)\, U_3 \big) \, {\rm d}t \, ,\quad j \, = \, 1,2 \, .
\end{equation}
Given an angle $\Psi$, the Rankine-Hugoniot conditions
\begin{equation*}
-\big( f_1(U_3) -f_1(U_1) \big) \sin \Psi +\big( f_2(U_3) -f_2(U_1) \big) \cos \Psi \, = \, 0 \, ,
\end{equation*}
with $U_1 \neq U_3$, equivalently read
\begin{align}
&\det \Big( -\sin \Psi \, A_1(U_1,U_3) +\cos \Psi \, A_2(U_1,U_3) \Big) \, = \, 0 \, ,\label{defPsiU13} \\
&U_3 -U_1 \in \text{\rm Ker } \Big( -\sin \Psi \, A_1(U_1,U_3) +\cos \Psi \, A_2(U_1,U_3) \Big) \, .\notag
\end{align}

Let us recall the expression:
\begin{multline*}
\det \Big( -\sin \Psi \, A_1(U,U) +\cos \Psi \, A_2(U,U) \Big) \, = \, \det P(U)^{-1} \, \times \, \big( -u \, \sin \Psi +v \, \cos \Psi -c \big) \\
\times \, \big( -u \, \sin \Psi +v \, \cos \Psi \big)^2 \, \times \, \big( -u \, \sin \Psi +v \, \cos \Psi +c \big) \, ,
\end{multline*}
where the invertible matrix $P(U)$ is given in \eqref{defPU}. We can therefore apply the implicit function Theorem and determine 
a unique angle $\Psi (U_1,U_3)$ that satisfies the eikonal equation \eqref{defPsiU13} for any pair $(U_1,U_3)$ sufficiently close 
to $(\underline{U}_1,\underline{U}_1)$, together with $\Psi (\underline{U}_1,\underline{U}_1) =\Psi_0$. In the case $U_1=U_3=U$, 
the angle $\Psi(U,U)$ is obtained by solving
\begin{equation*}
-u \, \sin \Psi +v \, \cos \Psi +c \, = \, 0 \, ,
\end{equation*}
with $\Psi(U,U)$ close to $\Psi_0$ ($u$ is close to $\overline{u}$, $v$ is close to $v_1$ and $c$ is close to $c_1$). This means that 
the angle $\Psi(U,U)$ is determined by the relations:
\begin{equation}
\label{anglequelconque}
\cos \Psi(U,U) \, = \, \dfrac{-c \, v +u \, \sqrt{u^2 +v^2 -c^2}}{u^2 +v^2} \, ,\quad 
\sin \Psi(U,U) \, = \, \dfrac{c \, u +v \, \sqrt{u^2 +v^2 -c^2}}{u^2 +v^2} \, .
\end{equation}

For the angle $\Psi =\Psi(U_1,U_3)$, the matrix $-\sin \Psi \, A_1(U_1,U_3) +\cos \Psi \, A_2(U_1,U_3)$ has a one-dimensional kernel that 
is spanned by a vector $R(U_1,U_3)$ which we can choose to satisfy the normalization condition
\begin{equation}
\label{conventionR}
R(U,U) \, = \, {\mathcal R}(U) \, := \, \begin{bmatrix}
\tau \\
c \, \sin \Psi(U,U) \\
-c \, \cos \Psi(U,U) \\
0 \end{bmatrix} \, ,
\end{equation}
where $\tau$ denotes the specific volume for the state $U$ and $c$ the sound speed associated with $U$. Then for any real number 
$\lambda$ close to $0$ and any state $U_1$ close to $\underline{U}_1$, the implicit function Theorem shows that there exists a uniquely 
determined $\U_3 (\lambda,U_1)$ satisfying the Rankine-Hugoniot jump conditions
\begin{equation}
\label{defU3}
\U_3(\lambda,U_1) -U_1 \, = \, \lambda \, R \big( U_1,\U_3(\lambda,U_1) \big) \, .
\end{equation}
In particular, the function $\U_3$ in \eqref{defU3} satisfies:
\begin{equation*}
\U_3(0,U_1) =U_1 \, ,\quad {\rm d}_{U_1} \U_3(0,U_1) =I \, ,\quad \partial_\lambda \U_3(0,U_1) ={\mathcal R}(U_1) \, ,
\end{equation*}
for all $U_1$ close to $\underline{U}_1$ (and the vector ${\mathcal R}(U_1)$ is given in \eqref{conventionR}).

At this point, given any $U_1$ sufficiently close to $\underline{U}_1$ and any $\lambda$ close to $0$, we have constructed a weak 
solution to \eqref{euler} of the form
$$
\begin{cases}
U_1 \, , & -\sin \Psi(U_1,\U_3(\lambda,U_1)) \, x_1 +\cos \Psi(U_1,\U_3(\lambda,U_1)) \, x_2 > 0 \, ,\\
\U_3(\lambda,U_1) \, , & -\sin \Psi(U_1,\U_3(\lambda,U_1)) \, x_1 +\cos \Psi(U_1,\U_3(\lambda,U_1)) \, x_2 < 0 \, .
\end{cases}
$$
It is not clear yet whether this discontinuity is a contact discontinuity or a shock wave (at this stage, it could even be a nonadmissible 
shock, that is it could be a noncharacteristic discontinuity that violates Lax shock inequalities).

Ultimately, the state $U_1$ will be chosen as $\U_1(U_0(\eps))$ and the state $U_3$ will therefore be determined as 
$\U_3(\lambda,\U_1(U_0(\eps)))$ for some appropriate amplitude $\lambda$ that will depend on the small parameter $\eps$. 
As a matter of fact, we are now going to choose the amplitude $\lambda$ appropriately in order to make the states $U_2$ and 
$U_3$ have equal pressures.
\bigskip

$\bullet$ \underline{Step 3. Adapting the small amplitude to get equal pressures for the states $2$ and $3$.}

Let us still keep the upstream state $U$ free, close to $\underline{U}_0$. Then we specify the state $U_1$ as $\U_1(U)$ 
and the state $U_2$ as $\U_2(\eps,U)$. We also specify the state $U_3$ as $\U_3(\lambda,\U_1(U))$, as shown in Figure 
\ref{fig:premachstem} below.

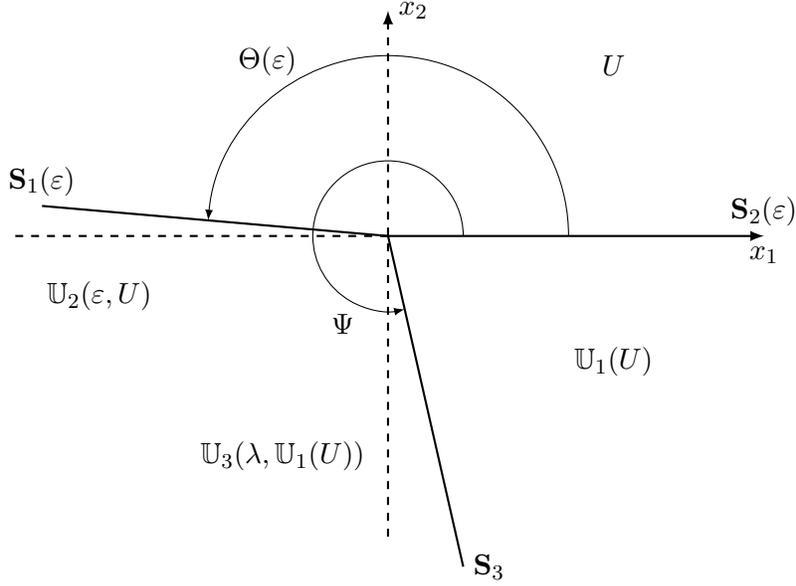
\begin{figure}[h!]
\begin{center}
\begin{tikzpicture}[scale=2,>=latex]
\pgfmathsetmacro{\theta}{ {4*atan(1)-atan(2/23)} }
\pgfmathsetmacro{\phi}{ {atan(1/2)+4*atan(1)} }
\pgfmathsetmacro{\psi}{ {8*atan(1)-atan(22/5)} }
\draw[thick,->] (0,0) -- (2.5,0) node[below] {$x_1$};
\node[above] at (2.5,0) {$\bfS_2(\eps)$};
\draw[thick,dashed,->] (0,-2)--(0,1.5) node[right] {$x_2$};
\draw[thick,dashed] (0,0) -- (-2.5,0);
\draw[thick] (0,0) -- (-2.3,0.2);
\node[above] at (-2.3,0.2) {$\bfS_1(\eps)$};
\node[above] at (1.5,1) {$U$};
\node[above] at (1.5,-1) {$\U_1(U)$};
\node[left] at (-1.5,-0.4) {$\U_2(\eps,U)$};
\node[below] at (-0.7,-1.3) {$\U_3(\lambda,\U_1(U))$};
\draw[thick] (0,0) -- (0.5,-2.2);
\node[right] at (0.5,-2.2) {$\bfS_3$};
\draw[->] (1.2,0) arc (0:\theta:1.2);
\node[above] at (-0.8,1) {$\Theta(\eps)$};
\draw[->] (0.5,0) arc (0:\psi:0.5);
\node[below] at (-0.3,-0.45) {$\Psi$};
\end{tikzpicture}
\caption{Towards the construction of the Mach stem configuration with a shallow angle. Here $\Psi$ is a short notation 
for $\Psi(\U_1(U),\U_3(\lambda,\U_1(U)))$. At this stage, there is no separation between the states $\U_2(\eps,U)$ and 
$\U_3(\lambda,\U_1(U))$.}
\label{fig:premachstem}
\end{center}
\end{figure}

In order to construct a Mach stem as depicted in Figure \ref{fig:machstem}, we need the states $\U_2(\eps,U)$ and $\U_3(\lambda,\U_1(U))$ 
to have equal pressures and collinear velocities, which will determine the angle $\Phi$ as their common argument\footnote{Both velocities 
will be small perturbations of ${\bf u}_1=(\overline{u},v_1)$ so collinearity here means proportionality with a positive scaling factor, which will 
uniquely determine the angle $\Phi$ close to $\Phi_0 \in (\pi,3\, \pi/2)$.}. Let us first adapt the (small) amplitude $\lambda$ in order to make 
the pressures of $\U_2(\eps,U)$ and $\U_3(\lambda,\U_1(U))$ equal. This relies again on the implicit function Theorem. Indeed, we consider 
the function
$$
(\lambda,\eps,U) \, \longmapsto \, p \big( \U_2(\eps,U) \big) - p \big( \U_3(\lambda,\U_1(U)) \big) \, ,
$$
where $p(U)$ denotes the pressure associated with a state $U$ (which, of course, only depends on the first and fourth coordinates of $U$, 
namely the specific volume and specific entropy). We wish to compute the partial derivative of the above function with respect to $\lambda$ at 
$(0,0,\underline{U}_0)$. We use $\partial_\lambda \U_3(0,\underline{U}_1)={\mathcal R}(\underline{U}_1)$, with ${\mathcal R}(\underline{U}_1)$ 
as in \eqref{conventionR}, and we find that the partial derivative with respect to $\lambda$ at $(0,0,\underline{U}_0)$ equals $c_1^2/\tau_1 \neq 0$. 
By applying the implicit function Theorem, we thus find that for all $\eps$ close to zero and for all $U$ close to $\underline{U}_0$, there exists a 
uniquely determined amplitude $\lambda (\eps,U)$ close to zero such that
\begin{equation*}
p \big( \U_2(\eps,U) \big) \, = \, p \big( \U_3(\lambda (\eps,U),\U_1(U)) \big) \, .
\end{equation*}
Because $\U_2(\eps,U)$ coincides with $\U_1(U)$ for $\eps=0$, and because there holds $\U_3(0,U_1)=U_1$, we find
$$
\lambda (0,U) \, = \, 0 \, ,
$$
for all $U$ close to $\underline{U}_0$. It is therefore convenient to rewrite $\lambda(\eps,U)$ as $\eps \, \lambda(\eps,U)$ for a new function 
$\lambda$, so that there holds
\begin{equation}
\label{relationlambda}
p \big( \U_2(\eps,U) \big) =p \big( \U_3(\eps \, \lambda (\eps,U),\U_1(U)) \big) \, ,
\end{equation}
for all $(\eps,U)$ close to $(0,\underline{U}_0)$.
\bigskip

$\bullet$ \underline{Step 4. Making the velocities collinear.}

At this point, for all `bifurcation parameter' $\eps$ small enough ($\eps$ has no prescribed sign so far), and for all state $U$ close to 
$\underline{U}_0$, we have constructed:
\begin{itemize}
 \item the discontinuity $\bfS_1(\eps)$ connecting $U$ to $\U_2(\eps,U)$ (the corresponding angle is $\Theta (\eps) =\pi-\eps$ as required 
 in \eqref{Theta}),
 \item the discontinuity $\bfS_2(\eps)$ connecting $U$ to $\U_1(U)$ (the corresponding angle equals zero, which is no loss of generality 
 up to rotating the axes),
 \item the discontinuity $\bfS_3(\eps)$ connecting $\U_1(U)$ to $\U_3(\eps \, \lambda (\eps,U),\U_1(U))$ (the corresponding angle equals 
 $\Psi (\U_1(U),\U_3(\eps \, \lambda (\eps,U),\U_1(U)))$). 
\end{itemize}
Whether the discontinuity $\bfS_3(\eps)$ is a shock wave is still undetermined at this point (it is not even known so far whether this 
discontinuity is admissible in the entropy sense). Let us also recall that the amplitude $\eps \, \lambda(\eps,U)$ of the discontinuity 
$\bfS_3(\eps)$ has been tuned so as to make the pressures of the states $\U_2(\eps,U)$ and $\U_3 (\eps \, \lambda (\eps,U),\U_1(U))$ 
equal, see \eqref{relationlambda}.

In what follows, we write ${\bf u}(U)$ to denote the velocity associated with any state $U$, that is the vector formed by the second and third 
coordinates of $U$. For any sufficiently small $\eps$ and any state $U$ close to $\underline{U}_0$, we define the quantity
\begin{equation}
\label{defdelta}
\delta (\eps,U) \, := \, \det \Big| {\bf u}(\U_2(\eps,U)) \, , \, {\bf u}(\U_3(\eps \, \lambda (\eps,U),\U_1(U))) \Big| \, .
\end{equation}
Our goal is to construct a state $U_0(\eps)$, possibly of the form \eqref{formU0eps}, such that $\delta (\eps,U_0(\eps))$ vanishes for all 
sufficiently small $\eps$. However, we are facing here a rather degenerate situation because the function $\delta$ satisfies
\begin{equation}
\label{propdelta}
\delta (0,U) \, = \, 0 \, ,
\end{equation}
for any state $U$ close to $\underline{U}_0$. Therefore there is no chance of proving, for instance, $\partial_u \delta (0,\underline{U}_0) 
\neq 0$ and conclude straightforwardly by the implicit function Theorem. As a matter of fact, the cancellation property \eqref{propdelta} 
shows that we can write
\begin{equation*}
\delta (\eps,U) \, = \, \eps \, \widetilde{\delta} (\eps,U) \, ,
\end{equation*}
and our only hope is to apply the implicit function Theorem to the rescaled function $\widetilde{\delta}$. There is however a price to pay, 
which is reminiscent of the analysis in \cite{MR2} and which confirms why $\overline{u}$ has to be fixed in some very specific way. Let 
us assume indeed that we are able to construct some \emph{smooth} $U_0(\eps)$ such that $U_0(0)=\underline{U}_0$ and $\delta 
(\eps,U_0(\eps))=0$ for all $\eps$ close to $0$. Then because the partial derivatives of $\delta$ with respect to $U$ at $(0,\underline{U}_0)$ 
vanish (use \eqref{propdelta}), we must necessarily have $\partial_\eps \delta (0,\underline{U}_0)=0$. It turns out, see below, that this 
`compatibility' condition on the function $\delta$ is an equivalent formulation of the constraint $\overline{u}=-V$ for the tangential velocity 
of $\underline{U}_0$ and $\underline{U}_1$. Let us therefore assume for now that we can prove the relation $\partial_\eps \delta 
(0,\underline{U}_0)=0$. In view of the factorization $\delta =\eps \, \widetilde{\delta}$, the relation $\partial_\eps \delta (0,\underline{U}_0)=0$ 
also reads
\begin{equation*}
\widetilde{\delta} (0,\underline{U}_0) \, = \, 0 \, .
\end{equation*}
In order to apply the implicit function Theorem to $\widetilde{\delta}$, we need to verify that some partial derivative of $\widetilde{\delta}$ 
with respect to one of the components of $U$ is nonzero. In what follows, we compute the partial derivative with respect to the second 
coordinate of $U$, which corresponds to the tangential velocity. We are going to show $\partial_u \widetilde{\delta} (0,\underline{U}_0) 
\neq 0$, or equivalently:
$$
\partial_{\eps,u}^2 \delta (0,\underline{U}_0) \, \neq \, 0 \, .
$$
This will yield the existence of a state $U_0(\eps)$ of the form \eqref{formU0eps} such that $\widetilde{\delta}(\eps,U_0(\eps))=0$ for all 
sufficiently small $\eps$. In particular, this will ultimately explain why in \cite{MR2} it was legitimate to expand only the tangential velocity 
of $U_0$ with respect to the small bifurcation parameter\footnote{To our knowledge, it is open, though likely, that the implicit function 
Theorem could also be applied with respect to the $v$ variable, which would yield another family of Mach stems bifurcating from the 
same reference step shock \eqref{shock}. In any case, there are necessarily infinitely many families of Mach stems that bifurcate from 
the reference shock \eqref{shock} since $\partial_{\eps,u}^2 \delta (0,\underline{U}_0) \neq 0$ implies that infinitely many linear combinations 
of partial derivatives such as $\mu \, \partial_{\eps,\tau}^2 \delta (0,\underline{U}_0) +\partial_{\eps,u}^2 \delta (0,\underline{U}_0)$ ($\mu$ 
small enough) are nonzero.}.

Summarizing the above arguments, there are two main points that remain to be clarified. We first need to prove the compatibility condition 
$\partial_\eps \delta (0,\underline{U}_0)=0$, and we also need to prove the invertibility condition $\partial_{\eps,u}^2 \delta (0,\underline{U}_0) 
\neq 0$.

\paragraph{The compatibility condition.} Let us compute the partial derivative of $\delta$ with respect to $\eps$. Specifying 
$U=\underline{U}_0$ in \eqref{defdelta}, we compute
\begin{equation}
\label{expressionderivee1}
\partial_\eps \delta (0,\underline{U}_0) \, = \, \det \big| {\bf u}_1 \, , \, {\bf u} (\dot{U}_3 -\dot{U}_2) \big| \, ,
\end{equation}
where we use the notation
\begin{subequations}
\label{defU2.U3.}
\begin{align}
\dot{U}_2 \, & \, := \, \partial_\eps \U_2(0,\underline{U}_0) 
\, = \, -{\rm d}f_2(\underline{U}_1)^{-1} \, \big( f_1(\underline{U}_1) -f_1(\underline{U}_0) \big) \, ,\label{expressionU2.} \\
\dot{U}_3 \, & \, := \, \dfrac{\partial}{\partial \eps} \U_3(\eps \, \lambda (\eps,\underline{U}_0),\underline{U}_1) \Big|_{\eps=0} 
\, = \, \lambda (0,\underline{U}_0) \, {\mathcal R}(\underline{U}_1) \, .\label{expressionU3.}
\end{align}
\end{subequations}
By differentiating the relation of equal pressures \eqref{relationlambda}, we have
\begin{equation*}
-\dfrac{c_1^2}{\tau_1} \, \tau \big( \dot{U}_2 -\dot{U}_3 \big) +\Gamma_1 \, T_1 \, s\big( \dot{U}_2 -\dot{U}_3 \big) 
=0 \, ,
\end{equation*}
where $\tau \big( \dot{U}_2 -\dot{U}_3 \big)$, resp. $s \big( \dot{U}_2 -\dot{U}_3 \big)$, denote the first, resp. fourth, coordinate of 
$\dot{U}_2 -\dot{U}_3$. This means that the vector $\dot{U}_2 -\dot{U}_3$ can be decomposed as:
$$
\dot{U}_2 -\dot{U}_3 \, = \, \dot{\mu} \, \begin{bmatrix}
\Gamma_1 \, T_1 \, \tau_1 \\
0 \\
0 \\
c_1^2 \end{bmatrix} +\begin{bmatrix}
0 \\
u\big( \dot{U}_2 -\dot{U}_3 \big) \\
v\big( \dot{U}_2 -\dot{U}_3 \big) \\
0 \end{bmatrix} \, ,
$$
for some scalar $\dot{\mu}$, where $\dot{U}_3$ satisfies \eqref{expressionU3.} and therefore belongs to $E_-(0,1)$ (because the 
vector ${\mathcal R}(\underline{U}_1)$ belongs to $E_-(0,1)$ as seen in the first part of the proof of Theorem \ref{thm1}).

It appears from \eqref{expressionderivee1} that the partial derivative $\partial_\eps \delta (0,\underline{U}_0)$ vanishes if and only if 
the velocity ${\bf u} (\dot{U}_3 -\dot{U}_2)$ is parallel to ${\bf u}_1$. Hence the partial derivative $\partial_\eps \delta (0,\underline{U}_0)$ 
vanishes if and only if the vector $\dot{U}_2 -\dot{U}_3$ belongs to the vector space
\begin{equation*}
\text{\rm Span } \left\{ 
\begin{bmatrix}
0 \\
\overline{u} \\
v_1 \\
0 \end{bmatrix} \, , \, \begin{bmatrix}
\Gamma_1 \, T_1 \, \tau_1 \\
0 \\
0 \\
c_1^2 \end{bmatrix} \right\} \, = \, E_0(0,1) \, .
\end{equation*}
Using \eqref{expressionU2.} and \eqref{expressionU3.}, this means that the partial derivative $\partial_\eps \delta (0,\underline{U}_0)$ 
vanishes if and only if the solution $\dot{U}_2$ to the linear system
\begin{equation*}
{\rm d}f_2(\underline{U}_1) \, \dot{U}_2 \, = \, -\big( f_1(\underline{U}_1) -f_1(\underline{U}_0) \big) \, ,
\end{equation*}
belongs to $E^s(0,1)$, which, as we have already seen, occurs if and only if $\overline{u} =-V$. We therefore impose $\overline{u}=-V$, 
which yields $\partial_\eps \delta (0,\underline{U}_0)=0$. It now only remains to verify the invertibility condition $\partial^2_{\eps,u} \delta 
(0,\underline{U}_0) \neq 0$.

\paragraph{The invertibility condition.} We shall need in the calculations below the precise expression of several quantities that have 
arisen earlier. Consistently with the notation in \cite[page 124]{MR2}, we define
$$
\beta \, := \, \cos \Psi_0 \, ,
$$
where the angle $\Psi_0$ is defined by \eqref{determinangle} (we recall that $\overline{u}=-V$ so we have $\overline{u}^2+v_1^2>c_1^2$). 
The expression found for $\beta$ in \cite{MR2} is exactly the one which we have recalled in \eqref{defbetac}. One can check that it can be 
equivalently defined by \eqref{determinangle} provided that $\overline{u}$ coincides with $-V$. (The significance of the parameter $\Phi$ in 
\cite{MR2}, which is defined as a root to \eqref{eqPhi}, is less clear.) In order to factorize many expressions below, we also introduce the notation
\begin{equation}
\label{defomega}
\Upsilon \, := \, \dfrac{\overline{u}}{c_1} \, .
\end{equation}
Recalling \eqref{determinangle}, we have
$$
\beta \, = \, \dfrac{M_1+\Upsilon \, \sqrt{\Upsilon^2+M_1^2-1}}{\Upsilon^2+M_1^2} \, ,\quad 
\sqrt{1-\beta^2} \, = \, \dfrac{-\Upsilon \, +M_1 \, \sqrt{\Upsilon^2+M_1^2-1}}{\Upsilon^2+M_1^2} \, ,
$$
which, in particular, implies (compare with the definition of $c_\star$ in \eqref{defbetac})
\begin{equation}
\label{expromega}
\Upsilon \, = \, -\dfrac{1-M_1 \, \beta}{\sqrt{1-\beta^2}} \, ,\qquad \sqrt{\Upsilon^2+M_1^2-1} \, = \, \dfrac{M_1-\beta}{\sqrt{1-\beta^2}} \, .
\end{equation}

Since we have $\overline{u}=-V$, we know that the Lopatinskii determinant $\Delta (\overline{u},0,1)$ vanishes. As recalled earlier, this means 
that the linear system \eqref{linearRH} (with $z=0$, $\eta=1$ and $\chi=1$) has a solution in $E^s(0,1)$. For later purposes, we shall need the 
expression of that solution, which is nothing but the vector $\dot{U}_2$ defined in \eqref{expressionU2.}. The solution to \eqref{linearRH} with 
$z=0$, $\eta=1$, and $\chi=1$, is given by:
\begin{equation}
\label{solutionRH}
\begin{cases}
\dot{\tau} \, = \, \left( 2+\Gamma_1 \, \left( 1-\dfrac{\tau_0}{\tau_1} \right) \right) \, 
\dfrac{v_1^2 \, \overline{u}}{v_0 \, (c_1^2-v_1^2)} \, (\tau_1-\tau_0) \, ,& \\
\dot{u} \, = \, v_1-v_0 \, , & \\
\dot{v} \, = \, \dfrac{\overline{u}}{c_1^2-v_1^2} \, \left( \dfrac{\tau_1}{\tau_0} -1 \right) \, \left( 
c_1^2 +v_1^2 +v_1^2 \, \Gamma_1 \, \left( 1-\dfrac{\tau_0}{\tau_1} \right) \right) \, ,& \\[2ex]
\dot{s} \, = \, \dfrac{{\bf j}^2}{T_1} \, \dfrac{\overline{u}}{v_0} \, (\tau_1-\tau_0)^2 \, . &
\end{cases}
\end{equation}
The mass flux ${\bf j}$ is defined in \eqref{RH}. The vector $\dot{U}_2$ is decomposed on $E^s(0,1)$ as follows:
\begin{equation}
\label{decompositionv.}
\dot{U}_2 \, = \, \alpha_0 \, \begin{bmatrix}
0 \\
\overline{u} \\
v_1 \\
0 \end{bmatrix} \, + \, \mu_0 \, \begin{bmatrix}
\Gamma_1 \, T_1 \, \tau_1 \\
0 \\
0 \\
c_1^2 \end{bmatrix} \, + \, \alpha_- \, \begin{bmatrix}
\tau_1 \\
c_1 \, \sin \Psi_0 \\
-c_1 \, \cos \Psi_0 \\
0 \end{bmatrix} \, ,
\end{equation}
with
\begin{equation}
\label{defxz}
\alpha_0 \, := \, -\dfrac{\overline{u}}{v_0} \, \dfrac{(v_1-v_0)^2}{\overline{u}^2 +v_1^2} \, ,\quad 
\alpha_- \, := \, \dfrac{v_1-v_0}{c_1 \, \sin \Psi_0} \, \dfrac{v_1}{v_0} \, 
\dfrac{\overline{u}^2 +v_0 \, v_1}{\overline{u}^2 +v_1^2} \, .
\end{equation}
The coefficient $\mu_0$ equals $\dot{s}/c_1^2$, with $\dot{s}$ given in \eqref{solutionRH}, but its expression will not be relevant 
in the subsequent analysis. Recalling the definition \eqref{defnu} of the positive parameter $\nu$ and using the expression 
\eqref{expromega} of $\Upsilon$, we can rewrite the coefficients $\alpha_0$ and $\alpha_-$ in \eqref{defxz} as
\begin{equation}
\label{defalpha0-}
\alpha_0 \, = \, -\dfrac{M_1\, \nu^2}{1+\nu} \, \dfrac{(1-M_1 \, \beta) \, \sqrt{1-\beta^2}}{1+M_1^2 -2\, M_1 \, \beta} \, ,\quad 
\alpha_- \, = \, -\dfrac{M_1\, \nu}{1+\nu} \, \left( \dfrac{1}{\sqrt{1-\beta^2}} 
+\dfrac{M_1^2 \, \nu \, \sqrt{1-\beta^2}}{1+M_1^2 -2\, M_1 \, \beta} \right) \, .
\end{equation}
For future use, we note that the vectors $\dot{U}_2,\dot{U}_3$ in \eqref{defU2.U3.} satisfy
\begin{equation}
\label{relationU23}
{\bf u}(\dot{U}_2 -\dot{U}_3) \, = \, \alpha_0 \, {\bf u}_1 \, ,\quad \lambda(0,\underline{U}_0) \, = \, \alpha_- \, ,
\end{equation}
where the coefficients $\alpha_0,\alpha_-$ are given in \eqref{defalpha0-}.
\bigskip

Let us now compute $\partial_{\eps,u}^2 \delta (0,\underline{U}_0)$ by differentiating \eqref{defdelta}. Observing that 
$\partial_u \U_1(\underline{U}_0)$ equals $(0,1,0,0)$, we obtain
\begin{align*}
\partial_{\eps,u}^2 \delta (0,\underline{U}_0) \, = \, & \, \det \, \left| \begin{bmatrix}
1 \\
0 \end{bmatrix} \, , \, {\bf u} (\dot{U}_3-\dot{U}_2) \right| 
-\det \big| {\bf u}_1 \, , \, {\bf u} (\partial^2_{\eps,u} \U_2 (0,\underline{U}_0)) \big| \\
& \, +\det \big| {\bf u}_1 \, , \, \partial_u \lambda (0,\underline{U}_0) \, {\bf u} ({\mathcal R}(\underline{U}_1)) \big| 
+\det \big| {\bf u}_1 \, , \, \lambda (0,\underline{U}_0) \, {\bf u} (\partial_u {\mathcal R}(\underline{U}_1)) \big| \\
= \, & \, -\alpha_0 \, v_1 -\det \big| {\bf u}_1 \, , \, {\bf u} (\partial^2_{\eps,u} \U_2 (0,\underline{U}_0)) \big| \\
& \, -c_1 \, (\overline{u} \, \cos \Psi_0 +v_1 \, \sin \Psi_0) \, \partial_u \lambda (0,\underline{U}_0) 
+\alpha_- \, \det \big| {\bf u}_1 \, , \, {\bf u} (\partial_u {\mathcal R}(\underline{U}_1)) \big| \, ,
\end{align*}
where we have used \eqref{relationU23} and the expression \eqref{conventionR} of ${\mathcal R}(\underline{U}_1)$. Using now 
\eqref{determinangle}, we have already simplified the expression of $\partial_{\eps,u}^2 \delta (0,\underline{U}_0)$ into:
\begin{multline*}
\dfrac{1}{c_1} \, \partial_{\eps,u}^2 \delta (0,\underline{U}_0) \, = \, 
M_1 \, \alpha_0 -\dfrac{1}{c_1} \, \det \big| {\bf u}_1 \, , \, {\bf u} (\partial^2_{\eps,u} \U_2 (0,\underline{U}_0)) \big| \\
-c_1 \, \sqrt{\Upsilon^2 +M_1^2 -1} \, \partial_u \lambda (0,\underline{U}_0) 
+\dfrac{\alpha_-}{c_1} \, \det \big| {\bf u}_1 \, , \, {\bf u} (\partial_u {\mathcal R}(\underline{U}_1)) \big| \, ,
\end{multline*}
that is, using \eqref{expromega},
\begin{multline}
\label{calcul1}
\dfrac{1}{c_1} \, \partial_{\eps,u}^2 \delta (0,\underline{U}_0) \\
= \, M_1 \, \alpha_0 -\dfrac{1}{c_1} \, \det \big| {\bf u}_1 \, , \, {\bf u} (\partial^2_{\eps,u} \U_2 (0,\underline{U}_0)) \big| 
-\dfrac{M_1-\beta}{\sqrt{1-\beta^2}} \, c_1 \, \partial_u \lambda (0,\underline{U}_0) 
+\dfrac{\alpha_-}{c_1} \, \det \big| {\bf u}_1 \, , \, {\bf u} (\partial_u {\mathcal R}(\underline{U}_1)) \big| \, .
\end{multline}
There are three still undetermined quantities on the right hand side of \eqref{calcul1}: the scalar $\partial_u \lambda (0,\underline{U}_0)$, 
and the vectors $\partial_u {\mathcal R}(\underline{U}_1)$, $\partial^2_{\eps,u} \U_2 (0,\underline{U}_0)$.
\bigskip

The vector $\partial^2_{\eps,u} \U_2 (0,\underline{U}_0)$ is obtained by differentiating the second equation in \eqref{defU12} with respect 
to $\eps$ and $u$, and by therefore solving the linear system:
$$
{\rm d}f_2(\underline{U}_1) \, \partial^2_{\eps,u} \U_2 (0,\underline{U}_0) \, = \, -\partial_u {\rm d}f_2(\underline{U}_1) \, \dot{U}_2 
+\partial_u f_1(\underline{U}_0) -\partial_u f_1(\underline{U}_1) \, .
$$
This means that $\partial^2_{\eps,u} \U_2 (0,\underline{U}_0)$ is a solution to the linear system
$$
{\rm d}f_2(\underline{U}_1) \, \partial^2_{\eps,u} \U_2 (0,\underline{U}_0) \, = \, \begin{bmatrix}
\rho_0 -\rho_1 \\
(\rho_0 -\rho_1) \, \overline{u} \\
0 \\
\rho_0 \, \left( \dfrac{1}{2} \, |{\bf u}_0|^2 +e_0 \right) -\rho_1 \, \left( \dfrac{1}{2} \, |{\bf u}_1|^2 +e_1 \right) \end{bmatrix} \, ,
$$
and we eventually obtain the very simple expression
\begin{equation*}
\partial^2_{\eps,u} \U_2 (0,\underline{U}_0) =\dfrac{1}{\overline{u}} \, \begin{bmatrix}
\, \dot{\tau} \, \\
0 \\
\, \dot{v} \, \\
\, \dot{s} \, \end{bmatrix} \, ,\quad 
\dfrac{\dot{v}}{c_1} \, = \, -M_1 \, \alpha_0 +\beta \, \alpha_- \, ,
\end{equation*}
where $\dot{\tau},\dot{v},\dot{s}$ are the first, third and fourth coordinates of the vector $\dot{U}_2$ and are given in \eqref{solutionRH} 
or \eqref{decompositionv.}. This already simplifies \eqref{calcul1} into
\begin{equation}
\label{calcul1-1}
\dfrac{1}{c_1} \, \partial_{\eps,u}^2 \delta (0,\underline{U}_0) \, = \, 2 \, M_1 \, \alpha_0 +\beta \, \alpha_- 
-\dfrac{M_1-\beta}{\sqrt{1-\beta^2}} \, c_1 \, \partial_u \lambda (0,\underline{U}_0) 
+\dfrac{\alpha_-}{c_1} \, \det \big| {\bf u}_1 \, , \, {\bf u} (\partial_u {\mathcal R}(\underline{U}_1)) \big| \, .
\end{equation}
\bigskip

Differentiating now the pressure equality \eqref{relationlambda} with respect to $\eps$ and $u$, we get
$$
-\dfrac{c_1^2}{\tau_1^2} \, \tau(\partial^2_{\eps,u} \U_2 (0,\underline{U}_0)) +\dfrac{c_1^2}{\tau_1} \, \partial_u \lambda (0,\underline{U}_0) 
+\dfrac{\Gamma_1 \, T_1}{\tau_1} \, s(\partial^2_{\eps,u} \U_2 (0,\underline{U}_0)) \, = \, 0 \, ,
$$
or equivalently (here we use the expression of $\partial^2_{\eps,u} \U_2 (0,\underline{U}_0)$ and \eqref{linearRH} with $z=0$, $\eta=1$ and 
$\chi=1$)
\begin{equation*}
c_1 \, \partial_u \lambda (0,\underline{U}_0) \, = \, \dfrac{1}{\overline{u} \, c_1} \, \left( 
\dfrac{c_1^2}{\tau_1} \, \dot{\tau} -\Gamma_1 \, T_1 \, \dot{s} \right) \, = \, 
\dfrac{v_1}{\overline{u} \, c_1} \, \dot{v} -\dfrac{v_1}{c_1} \, \left( 1 -\dfrac{\tau_1}{\tau_0} \right) \, = \, 
\dfrac{1}{\Upsilon} \, \big( M_1^2 \, \alpha_0 +M_1 \, \beta \, \alpha_- \big) +\dfrac{M_1 \, \nu}{1+\nu} \, .
\end{equation*}
This simplifies \eqref{calcul1-1} into
\begin{multline*}
\dfrac{1}{c_1} \, \partial_{\eps,u}^2 \delta (0,\underline{U}_0) \, = \, 2 \, M_1 \, \alpha_0 +\beta \, \alpha_- 
+\dfrac{M_1-\beta}{1-M_1 \, \beta} \, \big( M_1^2 \, \alpha_0 +M_1 \, \beta \, \alpha_- \big) 
-\dfrac{M_1 \, \nu \, (M_1-\beta)}{(1+\nu) \, \sqrt{1-\beta^2}} \\
+\dfrac{\alpha_-}{c_1} \, \det \big| {\bf u}_1 \, , \, {\bf u} (\partial_u {\mathcal R}(\underline{U}_1)) \big| \, ,
\end{multline*}
that is, after a little bit of factorizations
\begin{multline}
\label{calcul1-2}
\dfrac{1}{c_1} \, \partial_{\eps,u}^2 \delta (0,\underline{U}_0) \, = \, \dfrac{M_1 \, (2+M_1^2)-3 \, M_1^2 \, \beta}{1-M_1 \, \beta} \, \alpha_0 
+\dfrac{1+M_1^2-2 \, M_1 \, \beta}{1-M_1 \, \beta} \, \beta \, \alpha_- \\
-\dfrac{M_1 \, \nu \, (M_1-\beta)}{(1+\nu) \, \sqrt{1-\beta^2}} 
+\dfrac{\alpha_-}{c_1} \, \det \big| {\bf u}_1 \, , \, {\bf u} (\partial_u {\mathcal R}(\underline{U}_1)) \big| \, .
\end{multline}

Using \eqref{anglequelconque}, we now differentiate the definition \eqref{conventionR} with respect to $u$ and get
\begin{equation*}
\partial_u {\mathcal R} (\underline{U}_1) \, = \, -\dfrac{c_1 \, \sin\Psi_0}{\sqrt{\overline{u}^2 +v_1^2 -c_1^2}} 
\, \begin{bmatrix}
0 \\
\cos \Psi_0 \\
\sin \Psi_0 \\
0 \end{bmatrix} \, = \, \dfrac{1-\beta^2}{M_1-\beta} \, \begin{bmatrix}
0 \\
\beta \\
- \sqrt{1-\beta^2} \\
0 \end{bmatrix} \, .
\end{equation*}
Incorporating this last expression in \eqref{calcul1-2}, we obtain
\begin{multline}
\label{calcul2}
\dfrac{1}{c_1} \, \partial_{\eps,u}^2 \delta (0,\underline{U}_0) \, = \, \dfrac{M_1 \, (2+M_1^2)-3 \, M_1^2 \, \beta}{1-M_1 \, \beta} \, \alpha_0 
+\dfrac{1+M_1^2-2 \, M_1 \, \beta}{1-M_1 \, \beta} \, \beta \, \alpha_- \\
+\dfrac{1-\beta^2}{M_1-\beta} \, \alpha_- -\dfrac{M_1 \, \nu \, (M_1-\beta)}{(1+\nu) \, \sqrt{1-\beta^2}} \, .
\end{multline}

We now use the expressions of $\alpha_0$ and $\alpha_-$ in \eqref{defalpha0-} and derive the final expression
\begin{multline*}
\dfrac{1+\nu}{\nu \, v_1 \, \sqrt{1-\beta^2}} \, \partial_{\eps,u}^2 \delta (0,\underline{U}_0) \\
= \, \dfrac{1+M_1^2 -2\, M_1 \, \beta}{(M_1 -\beta) \, (1-M_1 \, \beta)} 
+\nu \, M_1 \, \dfrac{M_1\, (3+M_1^2) \, \beta^2 -2\, (1+3\, M_1^2) \, \beta +M_1\, (3+M_1^2)}
{(M_1 -\beta) \, (1-M_1 \, \beta) \, (1+M_1^2 -2\, M_1 \, \beta)} \, .
\end{multline*}
Recalling that $M_1$ belongs to $(0,1)$ and that $\beta$ belongs to $(-1,M_1)$, we see that up to a nonzero multiplicative factor, 
$\partial_{\eps,u}^2 \delta (0,\underline{U}_0)$ can we written as $\Omega_0 +\nu \, M_1 \, \Omega_1$ where both $\Omega_0$ 
and $\Omega_1$ are positive quantities. Consequently, we have $\partial_{\eps,u}^2 \delta (0,\underline{U}_0) \neq 0$ and we can 
apply the implicit function Theorem to the function $\widetilde{\delta}$. We can therefore construct a smooth function $U_0(\eps)$ of 
the form \eqref{formU0eps} such that $\delta(\eps,U_0(\eps))=0$ for all $\eps$ close to $0$.

Since the velocities of the two states $\U_2(\eps,U_0(\eps))$ and $\U_3(\eps \, \lambda(\eps,\U_1(U_0(\eps))),\U_1(U_0(\eps)))$ are 
collinear and close to ${\bf u}_1$, we can determine the angle $\Phi(\eps)$, close to $\Phi_0$, as their common argument and therefore 
determine the location of the contact discontinuity ${\bf CD}(\eps)$.
\bigskip

$\bullet$ \underline{Step 5. Conclusion.}

It remains to verify that the corresponding four wave pattern which we have just constructed defines a Mach stem for any \emph{positive} 
$\eps$. For the sake of clarity, let us denote
$$
U_1 (\eps) \, := \, \U_1 (U_0(\eps)) \, ,\quad U_2 (\eps) \, := \, \U_2 (\eps,U_0(\eps)) \, ,\quad 
U_3 (\eps) \, := \, \U_3 (\eps \, \lambda (\eps,U_0(\eps)),U_1 (\eps)) \, .
$$
We also define the angle
$$
\Psi(\eps) \, := \, \Psi (U_1 (\eps),U_3(\eps)) \, ,
$$
which represents the location of the discontinuity between $U_1 (\eps)$ and $U_3(\eps)$. All these functions are defined on a common 
interval $[-\eps_0,\eps_0]$ and are smooth functions of $\eps$ on that interval.

Recalling that the vector $U_0(\eps)$ is of the form \eqref{formU0eps}, we compute
\begin{align*}
U_0'(0) \, & \, = \, \big( 0,u'(0),0,0 \big) \, ,\qquad U_1'(0) \, = \, \big( 0,u'(0),0,0 \big) \, ,\\
U_3'(0) \, & \, = \, \big( \alpha_- \, \tau_1,\alpha_- \, c_1 \, \sin \Psi_0 +u'(0),-\alpha_- \, c_1 \, \cos \Psi_0,0 \big) \, ,
\end{align*}
where we have used the relation $\partial_u \U_1(\underline{U}_0)=(0,1,0,0)$, \eqref{relationU23}, \eqref{conventionR} and the properties 
of $\U_3$.

We know that the states $U_0(\eps)$ and $U_1(\eps)$ satisfy the Rankine-Hugoniot jump relations across $\bfS_2(\eps) = \{ x_2=0 \}$. 
Similarly $U_0(\eps)$ and $U_2(\eps)$ satisfy the Rankine-Hugoniot jump relations across $\bfS_1(\eps) = \{ \sin \eps \, x_1 +\cos \eps 
\, x_2 =0 \}$. Since the reference state \eqref{shock} satisfies \eqref{RH}, the discontinuities $\bfS_1(\eps)$ and $\bfS_2(\eps)$ are 
necessarily shock waves with $U_0(\eps)$ being the state ahead of the shock. We also know from our previous construction that the 
states $U_2(\eps)$ and $U_3(\eps)$ are connected by a contact discontinuity, which we denote ${\bf CD}(\eps)$, whose angle $\Phi (\eps)$ 
is close to $\Phi_0$. The causality conditions are easily verified since we have
$$
{\bf u}_1 \cdot (\cos \Phi_0,\sin \Phi_0) \, = \, |{\bf u}_1| \, > \, 0 \, ,\quad 
{\bf u}_1 \cdot (\cos \Psi_0,\sin \Psi_0) \, = \, \sqrt{\overline{u}^2+v_1^2-c_1^2} \, > \, 0 \, ,
$$
by the definition of the angles $\Phi_0$ and $\Psi_0$. By continuity of the states $U_1(\eps)$, $U_2(\eps)$, $U_3(\eps)$ with respect to 
$\eps$, we thus have
$$
{\bf u}(U_2(\eps)) \cdot (\cos \Phi(\eps),\sin \Phi(\eps)) \, > \, 0 \, ,\quad 
{\bf u}(U_1(\eps)) \cdot (\cos \Psi(\eps),\sin \Psi(\eps)) \, > \, 0 \, ,
$$
for all $\eps$ (up to restricting the interval $[-\eps_0,\eps_0]$).

The only remaining fact to clarify is to determine whether the discontinuity $\bfS_3(\eps)$ that connects the states $U_1(\eps)$ and 
$U_3(\eps)$ is a shock wave. This is where the sign condition on $\eps$ will arise, as in \cite{MR2}. Let us introduce the function
\begin{align*}
\F(\Psi,U) \, := \, & \, \det \Big( -\sin \Psi \, A_1(U,U) +\cos \Psi \, A_2(U,U) \Big) \\
= \, & \, \det P(U)^{-1} \, \big( -\sin \Psi \, u +\cos \Psi \, v \big)^2 \, \big( -\sin \Psi \, u +\cos \Psi \, v-c \big) \, 
\big( -\sin \Psi \, u +\cos \Psi \, v+c \big) \, .
\end{align*}
This function vanishes at $(\Psi_0,\underline{U}_1)$ because of the factor $-\sin \Psi \, u +\cos \Psi \, v+c$. All other factors are nonzero 
at $(\Psi_0,\underline{U}_1)$. Differentiating with respect to $\eps$ the relation
$$
\det \Big( -\sin \Psi(\eps) \, A_1(U_1(\eps),U_3(\eps)) +\cos \Psi(\eps) \, A_2(U_1(\eps),U_3(\eps)) \Big) \, = \, 0 \, ,
$$
and using the symmetry of $A_1,A_2$ with respect to $U_1,U_3$, see \eqref{defA1A2}, we end up with the relation
\begin{equation*}
\partial_\Psi \F (\Psi_0,\underline{U}_1) \, \Psi'(0) 
+\dfrac{1}{2} \, {\rm d}_U \F (\Psi_0,\underline{U}_1) \cdot \big( U_1'(0) +U_3'(0) \big) \, = \, 0 \, .
\end{equation*}
Using the decomposition of $\F$ as a product and the expressions of $U_1'(0)$ and $U_3'(0)$, we obtain the relation
\begin{equation}
\label{relationPsi'0}
\sqrt{\overline{u}^2 +v_1^2 -c_1^2} \, \Psi'(0) +\sin \Psi_0 \, u'(0) \, = \, -\dfrac{1}{2} \, \alpha_- \, c_1 \, {\mathcal G}_1 \, ,\quad 
{\mathcal G}_1 \, := \, -\dfrac{\tau_1}{2} \, \dfrac{\partial_{\tau \tau \tau}^3 e (\tau_1,s_1)}{\partial_{\tau \tau}^2 e (\tau_1,s_1)} \, ,
\end{equation}
which connects the derivative $\Psi'(0)$ of the angle $\Psi(\eps)$ with the first order variation $u'(0)$ of the state $U_0(\eps)$. 
Let us observe that because of our assumptions on the equation of state, ${\mathcal G}_1$ is a positive quantity (this quantity 
is a measure of the genuine nonlinearity of the characteristic fields associated with the acoustic waves, see \cite{mplohr}).

We can now verify that the discontinuity ${\bf S}_3(\eps)$ is a shock wave for any sufficiently small $\eps>0$. Indeed, the only 
thing to verify is that Lax shock inequalities
\begin{equation*}
\dfrac{{\bf u}(U_1(\eps)) \cdot (\sin \Psi(\eps),-\cos \Psi(\eps))}{c(U_1(\eps))} \, > \, 1 \, > \, 
\dfrac{{\bf u}(U_3(\eps)) \cdot (\sin \Psi(\eps),-\cos \Psi(\eps))}{c(U_3(\eps))} \, ,
\end{equation*}
are satisfied for any sufficiently small $\eps>0$. Using the expressions of $U_1'(0)$ and $U_3'(0)$, we compute
\begin{align*}
\dfrac{{\rm d}}{{\rm d} \eps} \Big( {\bf u}(U_1(\eps)) \cdot (\sin \Psi(\eps),-\cos \Psi(\eps)) -c(U_1(\eps)) \Big) \Big|_{\eps=0} 
\, = \, & \, \sqrt{\overline{u}^2 +v_1^2 -c_1^2} \, \Psi'(0) +\sin \Psi_0 \, u'(0) \\
= \, & \, -\dfrac{1}{2} \, \alpha_- \, c_1 \, {\mathcal G}_1 \, ,
\end{align*}
where we have used \eqref{relationPsi'0}, and we also compute\footnote{Here we use the relation $\partial c/\partial \tau=(1-{\mathcal G}) 
\, c/\tau$.}
\begin{align*}
\dfrac{{\rm d}}{{\rm d} \eps} \Big( {\bf u}(U_3(\eps)) \cdot (\sin \Psi(\eps),-\cos \Psi(\eps)) -c(U_3(\eps)) \Big) \Big|_{\eps=0} 
\, = \, & \, \sqrt{\overline{u}^2 +v_1^2 -c_1^2} \, \Psi'(0) +\sin \Psi_0 \, u'(0) +\alpha_- \, c_1 \, {\mathcal G}_1\\
= \, & \, \dfrac{1}{2} \, \alpha_- \, c_1 \, {\mathcal G}_1 \, ,
\end{align*}
where we have used \eqref{relationPsi'0} again. We observe from the expression \eqref{defalpha0-} that $\alpha_-$ is negative (recall 
that the parameter $\nu$ in \eqref{defnu} is positive), and since ${\mathcal G}_1$ is positive, we thus have
\begin{align*}
\dfrac{{\rm d}}{{\rm d} \eps} \Big( {\bf u}(U_1(\eps)) \cdot (\sin \Psi(\eps),-\cos \Psi(\eps)) -c(U_1(\eps)) \Big) \Big|_{\eps=0} 
\, & \, > \, 0 \, ,\\
\dfrac{{\rm d}}{{\rm d} \eps} \Big( {\bf u}(U_3(\eps)) \cdot (\sin \Psi(\eps),-\cos \Psi(\eps)) -c(U_3(\eps)) \Big) \Big|_{\eps=0} 
\, & \, < \, 0 \, ,
\end{align*}
which implies that Lax shock inequalities are satisfied for the discontinuity ${\bf S}_3(\eps)$ for any sufficiently small $\eps>0$. 
This completes the proof of Theorem \ref{thm1}.
\end{proof}

\bibliographystyle{alpha}
\bibliography{MachStems}
\end{document}